\documentclass[12pt, draft]{amsart}

\setlength{\textheight}{23cm}
\setlength{\textwidth}{16cm}
\setlength{\oddsidemargin}{0cm}
\setlength{\evensidemargin}{0cm}
\setlength{\topmargin}{-0.5cm}

\usepackage{enumerate}

\newtheorem{theorem}{Theorem}[section]
\newtheorem{proposition}[theorem]{Proposition}
\newtheorem{lemma}[theorem]{Lemma}
\newtheorem{corollary}[theorem]{Corollary}

\newtheorem{thm}{Theorem}

\newtheorem{mthm}{Main Theorem}

\theoremstyle{remark}
\newtheorem{remark}[theorem]{Remark}
\theoremstyle{definition}
\newtheorem*{definition}{Definition}
\theoremstyle{plain}

\allowdisplaybreaks

\begin{document}

\title[Zeros off the critical line for non-monomial polynomial of zeta-functions ]{On complex zeros off the critical line for non-monomial polynomial of zeta-functions}

\author{Takashi Nakamura}
\address{Department of Mathematics Faculty of Science and Technology \\Tokyo University of Science Noda, CHIBA 278-8510 JAPAN}
\email{nakamura\_takashi@ma.noda.tus.ac.jp}

\author{{\L}ukasz Pa\'nkowski}
\address{Faculty of Mathematics and Computer Science, Adam Mickiewicz University, Umultowska 87, 61-614 Pozna\'{n}, POLAND}
\email{lpan@amu.edu.pl}

\thanks{The first author was partially supported by JSPS Grants 21740024. The second author was partially supported by the grant no. N N201 6059 40 from National Science Centre.}

\subjclass[2010]{Primary 11M26, 11M32 
}

\keywords{hybrid universality, Lindel\"of hypothesis, Zeros of zeta-functions associated to symmetric matrices, Euler-Zagier multiple zeta-functions, spectral zeta-functions, Barnes multiple zeta-functions and Shintani multiple zeta-functions}

\begin{abstract}
In this paper, we show that any polynomial of zeta or $L$-functions with some conditions has infinitely many complex zeros off the critical line. This general result has abundant applications. By using the main result, we prove that the zeta-functions associated to symmetric matrices treated by Ibukiyama and Saito, certain spectral zeta-functions and the Euler-Zagier multiple zeta-functions have infinitely many complex zeros off the critical line. Moreover, we show that the Lindel\"of hypothesis for the Riemann zeta-function is equivalent to the Lindel\"of hypothesis for zeta-functions mentioned above despite of the existence of the zeros off the critical line. Next we prove that the Barnes multiple zeta-functions associated to rational or transcendental parameters have infinitely many zeros off the critical line. By using this fact, we show that the Shintani multiple zeta-functions have infinitely many complex zeros under some conditions. As corollaries, we show that the Mordell multiple zeta-functions, the Euler-Zagier-Hurwitz type of multiple zeta-functions and the Witten multiple zeta-functions have infinitely many complex zeros off the critical line.
\end{abstract}

\maketitle

\section{Introduction}
\subsection{Universality}
In 1975, Voronin \cite{Voronin} showed the universality theorem for the Riemann zeta-function $\zeta (s) := \sum_{n=1}^\infty n^{-s}$. To state it, let $D := \{ s \in {\mathbb{C}} : 1/2 < \Re (s) <1\}$ and $K\subset D$ be a compact set with connected complement. Denote by $\mu(A)$ the Lebesgue measure of the set $A$, and, for $T>0$, write $\nu_T \{ \cdots \} := T^{-1} \mu \{ \tau \in [0,T] : \cdots \}$ where the dots stand for a condition satisfied by $\tau$. Let $H_0(K)$ denote the space of continuous functions on $K$, which are analytic in the interior, equipped with the supremum norm $\Vert\cdot\Vert_K$ and $H(K)$ denote the subspace of $H_0(K)$ consisting of non-vanishing functions. Then the modern version of the Voronin theorem can be formulated as follows.
\begin{thm}\label{thm:Voronin}
For any $f\in H(K)$ and any $\varepsilon>0$, we have
\[
\liminf_{T\to\infty} \nu_T \bigl\{ \Vert \zeta(s+i\tau)-f(s) \Vert_K <\varepsilon \bigr\}>0.
\]
\end{thm}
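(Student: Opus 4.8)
The plan is to follow Voronin's original strategy, which couples a function-theoretic approximation step in a Hilbert space with an equidistribution argument. First I would exploit the non-vanishing hypothesis built into $H(K)$: since $f\in H(K)$ does not vanish on $K$ and $K$ has connected complement, there is a $g\in H_0(K)$ with $f=\exp(g)$, so it suffices to approximate $g$ by a suitable branch of $\log\zeta(s+i\tau)$. By Mergelyan's theorem I may first replace $g$ by a polynomial, hence assume $g$ extends analytically across a neighbourhood of a closed disk containing $K$, and then work in the Hardy space $H^2$ of that disk, so that every convergence statement below lives in a genuine Hilbert space.

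Next I would introduce the finite truncations of the logarithm of the Euler product. For $\Re(s)>1$ one has $\log\zeta(s)=\sum_p\sum_{k\ge1}(kp^{ks})^{-1}$, and the twisted factors
\[
u_p(s,\theta_p)=\sum_{k\ge1}\frac{e^{-2\pi i k\theta_p}}{k\,p^{ks}},\qquad \theta_p\in[0,1),
\]
serve as the building blocks. The aim of this step is purely analytic: to show that, as $\theta=(\theta_p)_p$ ranges over the infinite torus, the finite partial sums $\sum_{p\le N}u_p(\cdot,\theta_p)$ can be made to approximate $g$ in the $H^2$-norm to within any prescribed $\varepsilon$, for $N$ large.

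This approximation is the analytic heart of the argument and the step I expect to be the main obstacle. I would establish that the set of all sums of the conditionally convergent series $\sum_p u_p(\cdot,\theta_p)$ is dense in $H^2$, which rests on a rearrangement theorem for Hilbert spaces in the spirit of Pecherskii. The point is that the series is not absolutely convergent, and the decisive number-theoretic input is the divergence $\sum_p 1/p=\infty$: this guarantees that the norms of the blocks are large enough for the set of attainable sums to fill out a dense subspace. Verifying the hypotheses of the rearrangement theorem for this particular family — essentially controlling the behaviour of $\Re\{p^{-s}\}$ on the disk — is the delicate part.

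Finally I would pass from the existence of a good angle vector $\theta$ to a set of shifts $\tau$ of positive measure. The relevant arithmetic fact is that $\{\log p:p\text{ prime}\}$ is linearly independent over $\mathbb{Q}$, so by the Kronecker--Weyl equidistribution theorem the curve $\tau\mapsto\bigl(\tfrac{\tau}{2\pi}\log p\bmod1\bigr)_{p\le N}$ is equidistributed on the finite torus; hence the set of $\tau\in[0,T]$ for which $(p^{-i\tau})_{p\le N}$ lies near the good point has positive lower density. To convert the finite-product approximation into one by $\zeta$ itself I would invoke a mean-square estimate showing that $\zeta(s)$ is approximated in average by $\prod_{p\le N}(1-p^{-s})^{-1}$ throughout $D$, which follows from standard second-moment bounds for $\zeta$ in the strip. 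Combining the equidistribution with this mean-value approximation and the choice of $\theta$ yields $\liminf_{T\to\infty}\nu_T\bigl\{\Vert\zeta(s+i\tau)-f(s)\Vert_K<\varepsilon\bigr\}>0$, as required.
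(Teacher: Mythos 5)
The paper contains no proof of this statement: Theorem~\ref{thm:Voronin} is quoted as background, attributed to Voronin \cite{Voronin} and stated in its modern form, so there is no internal argument to measure your proposal against. Judged against the classical proof in the cited literature, your outline is essentially the standard Voronin--Bagchi argument and is sound as a roadmap: the reduction $f=\exp(g)$ with $g\in H_0(K)$ is legitimate precisely because $\mathbb{C}\setminus K$ is connected (so a continuous, hence analytic-in-the-interior, logarithm of $f$ exists), Mergelyan then lets you replace $g$ by a polynomial and work in a Hilbert space of analytic functions on a disk slightly larger than $K$; the denseness of the attainable sums of $\sum_p u_p(\cdot,\theta_p)$ via Pecherskii's rearrangement theorem, driven by the divergence of $\sum_p 1/p$, is indeed the analytic heart; and the passage to shifts via Kronecker--Weyl, intersecting a positive-density set with the complement of a small-density set, is the correct closing move. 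The one step you describe too casually is the last one: deducing that $\zeta(s+i\tau)$ is close to the truncated Euler product \emph{uniformly for $s\in K$}, for all $\tau$ outside a set of small density, does not follow from the single-abscissa bound $\int_0^T|\zeta(\sigma+it)|^2\,dt\ll T$ alone; one needs a mean value theorem of Montgomery--Vaughan type for the tail Dirichlet series together with a Cauchy-integral or subharmonicity argument to upgrade an $L^2$ estimate in $s$ to a supremum over $K$. This is standard and fixable, so it is not a genuine gap, but it is where most of the technical work in a complete write-up would sit.
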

Roughly speaking, this theorem implies that any non-vanishing analytic function can be uniformly approximated by the Riemann zeta-function. Subsequently, many mathematicians considered generalizations of universality (see for instance \cite{S}). For example, the strong universality property for the Hurwitz zeta-function $\zeta (s,a):= \sum_{n=0}^\infty (n+a)^{-s}$ was proved by Bagchi \cite{B} and Gonek \cite{Gonek}, independently (see \cite[Theorem 6.1.2 and Note in Section 6]{LauGa}). 
\begin{thm}\label{thm:2}
Let $0 < a < 1$ be transcendental. Then for any $f\in H_0(K)$ and any $\varepsilon > 0$, 
$$
\liminf_{T \rightarrow \infty} \nu_T \bigl\{ \Vert  \zeta (s+i\tau, a) - f(s) \Vert_K < \varepsilon \bigr\} > 0 .
$$
\end{thm}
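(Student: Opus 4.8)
The plan is to follow Bagchi's probabilistic method, the decisive feature being that $\zeta(s,a)$ carries no Euler product, so that the limiting random object will have \emph{all} of the relevant function space as its support, thereby permitting approximation of possibly vanishing targets. The arithmetic input is that, since $a$ is transcendental, the system $\{\log(n+a)\}_{n\geq 0}$ is linearly independent over $\mathbb{Q}$: any finite relation $\sum_n c_n\log(n+a)=0$ with $c_n\in\mathbb{Z}$ exponentiates to $\prod_n(n+a)^{c_n}=1$, which forces $a$ to be algebraic unless every $c_n$ vanishes. I would then introduce the infinite torus $\Omega=\prod_{n\geq 0}\{z\in\mathbb{C}:|z|=1\}$ with Haar probability measure and the random Dirichlet series
\[
\zeta(s,a,\omega)=\sum_{n=0}^\infty\frac{\omega_n}{(n+a)^s},\qquad \omega=(\omega_n)\in\Omega,
\]
checking that it converges almost surely for $\Re(s)>1/2$ and defines a $\Hol(D)$-valued random element, the square-summability $\sum_n(n+a)^{-2\sigma}<\infty$ for $\sigma>1/2$ being what drives the almost sure convergence.

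First I would prove a limit theorem: the family of measures $P_T(\,\cdot\,)=\nu_T\{\zeta(s+i\tau,a)\in\,\cdot\,\}$ on $\Hol(D)$ converges weakly as $T\to\infty$ to the distribution $P$ of $\zeta(s,a,\omega)$. The engine here is Weyl's equidistribution theorem: by the $\mathbb{Q}$-linear independence above, the curve $\tau\mapsto\bigl((n+a)^{-i\tau}\bigr)_{n\geq0}$ is uniformly distributed in $\Omega$, so averages of continuous functionals along $\tau$ converge to integrals over $\Omega$; transferring this through the continuous map $\omega\mapsto\zeta(s,a,\omega)$ from $\Omega$ to $\Hol(D)$, together with a tightness and tail estimate that lets me pass from finite truncations to the full series, yields the weak convergence.

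The main obstacle is the support computation: I would show that the support of $P$ is the whole space $\Hol(D)$, in contrast to the Voronin case where the Euler product confines the support to non-vanishing functions. The heart of this is a Péchersky-type denseness lemma, a functional generalization of the Riemann rearrangement theorem: if $(u_n)$ lies in a complex Hilbert space with $\sum_n\|u_n\|^2<\infty$ but $\sum_n\|u_n\|=\infty$, then the set of convergent sums $\sum_n\omega_n u_n$ with each $\omega_n$ on the unit circle is dense. Applying this with $u_n(s)=(n+a)^{-s}$ in a suitable Bergman-type Hilbert space attached to $D$, where $\|u_n\|^2\asymp n^{-2\sigma}$ is summable while $\sum_n\|u_n\|\asymp\sum_n n^{-\sigma}$ diverges for $1/2<\sigma<1$, shows that the admissible sums are dense, so every $g\in\Hol(D)$ lies in the support of $P$. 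The delicate points are verifying the two summability conditions and handling the conditional convergence so that the density statement transfers from the Hilbert-space norm to the topology of uniform convergence on compacta.

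Finally I would assemble the conclusion. Given $f\in H_0(K)$ and $\varepsilon>0$, Mergelyan's theorem, applicable since $K$ has connected complement, lets me replace $f$ by a function $g$ analytic on a neighborhood of $K$ with $\Vert f-g\Vert_K<\varepsilon/2$. The set $U=\{h\in\Hol(D):\Vert h-g\Vert_K<\varepsilon/2\}$ is open and, by the support theorem, satisfies $P(U)>0$. Since $U$ is open, the Portmanteau inequality $\liminf_{T\to\infty}P_T(U)\geq P(U)$ together with the weak convergence gives
\[
\liminf_{T\to\infty}\nu_T\bigl\{\Vert\zeta(s+i\tau,a)-f(s)\Vert_K<\varepsilon\bigr\}\geq P(U)>0,
\]
which is the assertion. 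I expect the support theorem of the third paragraph, resting on the rearrangement and denseness lemma, to be both the crux and the technically hardest ingredient.
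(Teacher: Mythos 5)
The paper does not actually prove this theorem: it quotes it from Bagchi \cite{B} and Gonek \cite{Gonek} (see also \cite[Theorem 6.1.2]{LauGa}), and your architecture --- limit theorem on $\Hol(D)$, random Dirichlet series over the infinite torus, support theorem, then Mergelyan plus the portmanteau inequality --- is precisely the architecture of that cited proof. Your linear-independence argument for $\{\log(n+a)\}_{n\ge 0}$ (transcendence of $a$ kills any integer relation $\prod_n(n+a)^{c_n}=1$), the limit theorem via Weyl equidistribution, and the final assembly are all sound.

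The genuine gap is in your statement of the rearrangement lemma, which is exactly the step you identify as the crux. The hypothesis ``$\sum_n\|u_n\|^2<\infty$ but $\sum_n\|u_n\|=\infty$'' does \emph{not} imply density of the set of convergent sums $\sum_n\omega_n u_n$: take $u_n=c_ne$ for a fixed unit vector $e$ with $\sum_n|c_n|^2<\infty$ and $\sum_n|c_n|=\infty$; every admissible sum then lies in the one-dimensional subspace $\mathbb{C}e$. Pe\v{c}hersky's theorem requires instead that $\sum_n|\langle u_n,e\rangle|=\infty$ for \emph{every} nonzero $e$ in the Hilbert space (equivalently, for every nonzero continuous linear functional), so the verification you sketch, $\sum_n\|u_n\|\asymp\sum_n n^{-\sigma}=\infty$, checks a condition that is insufficient. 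The real work --- and the genuinely hard part of the proof --- is to show that for every nonzero complex measure $\mu$ with compact support in $D$ one has $\sum_n\bigl|\int(n+a)^{-s}\,d\mu(s)\bigr|=\infty$. This is done by contradiction: setting $h(z)=\int e^{-sz}\,d\mu(s)$, one uses the exponential type of $h$, the distribution of the exponents $\log(n+a)$, and Bernstein/Borel--Carath\'eodory type estimates to show that convergence of $\sum_n|h(\log(n+a))|$ forces $h\equiv 0$, hence $\mu=0$ (see \cite[Lemmas 6.3.6--6.3.9]{LauGa}). Without this step the support computation, and with it the whole theorem, remains unproved.
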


As an application of strong universality for the Hurwitz zeta-function $\zeta (s,a)$, one can obtain the following theorem. The lower bound for the number of zeros of $\zeta (s,a)$ was first announced by Voronin \cite {Voronin} when $a$ is rational, but the proof (including transcendental $a$) was given by Bagchi \cite{B} and Gonek \cite{Gonek} (see also \cite[Theorems 8.4.7, 8.4.8 and 8.4.10, and Note in Section 8]{LauGa}). We write $f(x) \asymp x$ if there exist constants $0<C_1<C_2$ such that $C_1x \le f(x) \le C_2x$ for sufficiently large $x>0$. 
\begin{thm}\label{th:01}
For any $1/2 < \sigma_1 < \sigma_2 < 1$, the Hurwitz zeta-function $\zeta (s,a)$ where $a \ne 1/2, 1$ is rational or transcendental, has $\asymp T$ nontrivial zeros in the rectangle $\sigma_1 < \sigma < \sigma_2$, $0 < t <T$. 
\end{thm}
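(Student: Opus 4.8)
The plan is to prove $N(T) \asymp T$, where $N(T)$ denotes the number of zeros of $\zeta(s,a)$ in the rectangle $\sigma_1 < \sigma < \sigma_2$, $0 < t < T$, by establishing a lower bound $N(T) \gg T$ from strong universality and Rouch\'e's theorem, and an upper bound $N(T) \ll T$ from a mean-square estimate combined with Littlewood's lemma.

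For the lower bound in the transcendental case, I would fix a point $s_0 = \sigma_0 + it_0$ with $\sigma_1 < \sigma_0 < \sigma_2$ and a closed disk $K = \{ s : |s-s_0| \le r \}$ contained in the open strip, then take $f(s) = s - s_0 \in H_0(K)$, which vanishes only at $s_0$ and satisfies $|f(s)| = r$ on $\partial K$. Applying Theorem \ref{thm:2} with some $\varepsilon < r/2$, the set $A_T := \{ \tau \in [0,T] : \Vert \zeta(s+i\tau,a) - f(s) \Vert_K < \varepsilon \}$ has $\mu(A_T) \ge cT$ for some $c>0$ and all large $T$. For each $\tau \in A_T$ we have $|\zeta(s+i\tau,a) - f(s)| < \varepsilon < |f(s)|$ on $\partial K$, so Rouch\'e's theorem forces $\zeta(s+i\tau,a)$ to have exactly one zero inside $K$; equivalently, $\zeta(s,a)$ has a zero $\rho_\tau$ with $|\rho_\tau - (s_0 + i\tau)| < r$, hence $\sigma_1 < \Re \rho_\tau < \sigma_2$. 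Since $\Im \rho_\tau \in (t_0 + \tau - r,\, t_0 + \tau + r)$, any fixed zero can arise as $\rho_\tau$ only for $\tau$ in an interval of length at most $2r$, so the number of distinct zeros is at least $\mu(A_T)/(2r) \ge cT/(2r) \gg T$. For rational $a = p/q \ne 1/2, 1$, strong universality still holds: writing $\zeta(s,p/q)$ as a linear combination of Dirichlet $L$-functions modulo $q$ and using their joint universality permits approximation of functions with zeros, so the same Rouch\'e argument applies.

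For the upper bound, I would apply Littlewood's lemma to $\zeta(s,a)$ on the rectangle $[\sigma_1 - \delta,\, c] \times [0,T]$ for a small $\delta > 0$ and a sufficiently large $c > 1$ (so that $\zeta(s,a)$ is zero-free on and near the right edge). The dominant contribution is the left-edge integral $\int_0^T \log|\zeta(\sigma_1 - \delta + it, a)|\,dt$, which by concavity of the logarithm and Cauchy--Schwarz is at most $\frac{T}{2}\log\bigl(T^{-1}\int_0^T |\zeta(\sigma_1-\delta+it,a)|^2\,dt\bigr)$; the classical mean-square bound $\int_0^T |\zeta(\sigma+it,a)|^2\,dt = O(T)$, valid for every fixed $\sigma > 1/2$, renders this $O(T)$, while the horizontal $\arg$-integrals are $O(\log T)$ and the right-edge integral is $O(T)$. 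Littlewood's lemma then yields $\sum_{\beta > \sigma_1 - \delta} (\beta - (\sigma_1 - \delta)) = O(T)$, and since every zero counted by $N(T)$ satisfies $\beta - (\sigma_1 - \delta) \ge \delta$, dividing by $\delta$ gives $N(T) = O(T)$.

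The main obstacle is the lower bound, and specifically the passage from ``a positive proportion of shifts $\tau$ produces a zero'' to ``$\gg T$ \emph{distinct} zeros'': one must control how many shifts can share the same zero, which is exactly what the imaginary-part localization $\Im \rho_\tau \approx t_0 + \tau$ accomplishes. A conceptual point deserving emphasis is that ordinary universality as in Theorem \ref{thm:Voronin} does not suffice here, because the approximating function $f$ must itself have a zero; it is precisely the \emph{strong} universality available for $\zeta(s,a)$ when $a \ne 1/2, 1$ that drives the Rouch\'e argument, and this is why an analogous unconditional count for $\zeta(s)$ itself remains unavailable.
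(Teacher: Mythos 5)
Your proposal is correct and follows essentially the same route as the paper: the lower bound comes from strong universality plus Rouch\'e's theorem with the test function $s-s_0$ (and the explicit count of distinct zeros via the localization of $\Im\rho_\tau$), exactly as in the proof of Main Theorem 1, and the upper bound from Littlewood's lemma combined with the mean-square estimate, exactly as in Main Theorem 2. The only cosmetic difference is that you invoke ordinary strong universality rather than the hybrid version, which indeed suffices here since no general Dirichlet series coefficients intervene for the single function $\zeta(s,a)$.
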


\subsection{Hybrid universality}
From our point of view, the most important result is so-called (strong) hybrid universality, which is a connection between the Voronin theorem and the classical Kronecker approximation theorem. Denote the distance to the nearest integer by $\Vert\cdot\Vert$. The precise definition is as follows.
\begin{definition}
\emph{Hybrid universality} for the function $L(s)$ is the following property: Let $K\subset D$, $f \in H(K)$ and $\{\alpha_j\}_{1\leq j \leq k}$ be real numbers linearly independent over $\mathbb{Q}$. Then for any $\varepsilon>0$ and any real numbers $\{\theta_j\}_{1\leq j \leq k}$, we have
\[
\liminf_{T\to\infty}\nu_T \bigl\{ 
\left\Vert L(s+i\tau) - f(s)\right\Vert_K < \varepsilon , \quad
\Vert\tau\alpha_j - \theta_j\Vert < \varepsilon, \,\,\, 1\leq j \leq k
\bigr\}>0.
\]
Moreover, we say that the function $L(s)$ is hybridly strongly universal if a function belonging to $H_0(K)$ can be approximated.
\end{definition}
The first result on hybrid universality was proved in weaker form by Gonek \cite{Gonek} and slightly improved using different method by Kaczorowski and Kulas \cite{KaczorowskiKulas}. They showed that Dirichlet $L$-functions $L(s,\chi) := \sum_{n=1}^\infty \chi (n) n^{-s}$ satisfy the inequality in above definition for $\alpha_n = \log{p_n}$, where $p_n$ denotes the $n$-th prime number. The second author \cite{PankowskiHybrid} proved the hybrid universality in the most general form for an axiomatically defined wide class of $L$-functions having Euler product, which contains for instance Dirichlet $L$-functions. Furthermore in \cite{PankowskiWithoutE} the hybrid strong joint universality theorem was proved for $L$-functions  without Euler product like Lerch zeta-functions, twists of Lerch zeta-functions and periodic Hurwitz zeta-functions.

As an application of the hybrid universality, we have the following theorem. Recall that a general Dirichlet series is an arbitrary series of the form 
\begin{equation}\label{eq:gds}
\sum_{n=1}^\infty a_n e^{-\lambda_n s}, \qquad a_n \in {\mathbb{C}}, \quad \lambda_n \in {\mathbb{R}} .
\end{equation}
The authors showed the following universality theorem for certain combinations of $L$-functions in \cite[Theorem 1]{NaPa1} by using hybrid universality. 

\begin{thm}\label{th:napamm1}
Suppose that $L(s)$ is hybridly universal, $A(s)$ and $B(s)$ are not identically vanishing general Dirichlet series, which are absolutely convergent in the half-plane $\sigma > 1/2$, and $A(s)$ is non-vanishing in $D$. Let $F(s) := A(s)L(s)+B(s)$, $K\subset D$ and $f(s)$ be continuous on $K$ and analytic in the interior. Moreover assume that $f(s) \ne B(s)$ for all $s \in K$. Then for any $\varepsilon>0$, it holds that
\[
\liminf_{T\to\infty}\nu_T \bigl\{ \Vert F(s+i\tau) - f(s)\Vert_K < \varepsilon \bigr\}>0.
\]
Furthermore, if the function $L(s)$ is hybridly strongly universal then the assumption that $f(s) \ne B(s)$ for all $s \in K$ can be relaxed. 
\end{thm}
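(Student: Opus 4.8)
The plan is to reduce the claim to the hybrid universality of $L(s)$ by transferring the approximation of $f$ by $F$ into an approximation of a suitable target by $L$. Since $A(s)$ is non-vanishing and analytic on $D$, and both $A(s)$ and $B(s)$ are analytic there (being absolutely convergent Dirichlet series for $\sigma>1/2$), the function
$$g(s) := \frac{f(s)-B(s)}{A(s)}$$
is continuous on $K$ and analytic in its interior, so $g\in H_0(K)$; moreover the hypothesis $f(s)\neq B(s)$ on $K$ guarantees $g(s)\neq 0$ on $K$, i.e. $g\in H(K)$. The key point is that $f(s)=A(s)g(s)+B(s)$, whence
$$F(s+i\tau)-f(s) = A(s+i\tau)\bigl(L(s+i\tau)-g(s)\bigr) + \bigl(A(s+i\tau)-A(s)\bigr)g(s) + \bigl(B(s+i\tau)-B(s)\bigr).$$
Thus it suffices to produce a set of $\tau$ of positive lower density on which $L(s+i\tau)$ approximates $g(s)$ while \emph{simultaneously} $A(s+i\tau)\approx A(s)$ and $B(s+i\tau)\approx B(s)$ uniformly on $K$.

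The latter two approximations are where the Kronecker part of hybrid universality enters. First I would truncate: writing $A(s)=\sum_n a_n e^{-\lambda_n s}$ and $B(s)=\sum_n b_n e^{-\mu_n s}$, absolute convergence for $\sigma>1/2$ together with the identity $|e^{-\lambda_n(s+i\tau)}| = e^{-\lambda_n\Re(s)}$ (independent of $\tau$) shows that the tails beyond indices $N,M$ can be made smaller than any $\delta$, uniformly in $\tau$ and in $s\in K$. For the finitely many surviving frequencies $\lambda_1,\dots,\lambda_N,\mu_1,\dots,\mu_M$ I would choose reals $\beta_1,\dots,\beta_k$, linearly independent over $\mathbb{Q}$, such that each $\lambda_n/(2\pi)$ and each $\mu_n/(2\pi)$ is an \emph{integer} linear combination of the $\beta_j$; this is arranged by taking a $\mathbb{Q}$-basis of the span of these frequencies and rescaling it by a common denominator. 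The integrality is essential: if $\Vert\tau\beta_j\Vert$ is small for every $j$, then $\tau\lambda_n/(2\pi)=\sum_j p_{nj}(\tau\beta_j)$ lies close to an integer, so that $e^{-i\lambda_n\tau}=e^{-2\pi i\,\tau\lambda_n/(2\pi)}$ is close to $1$, and likewise for the $\mu_n$. Consequently $\Vert A(s+i\tau)-A(s)\Vert_K$ and $\Vert B(s+i\tau)-B(s)\Vert_K$ are bounded in terms of $\delta$ and $\max_j\Vert\tau\beta_j\Vert$.

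I would then invoke hybrid universality of $L(s)$ with target $g\in H(K)$, the linearly independent numbers $\beta_1,\dots,\beta_k$, and phases $\theta_j=0$. This yields a set of $\tau$ of positive lower density on which both $\Vert L(s+i\tau)-g(s)\Vert_K$ and $\max_j\Vert\tau\beta_j\Vert$ are arbitrarily small. On this set, using that $A(s+i\tau)$ and $g(s)$ are bounded on the compact $K$ (the former because $A(s+i\tau)\approx A(s)$ and $A$ is continuous on $K$), each of the three terms in the displayed decomposition is controlled, and the triangle inequality gives $\Vert F(s+i\tau)-f(s)\Vert_K<\varepsilon$ for a suitable choice of the small parameters. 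Since this set is contained in the set measured by $\nu_T$, its positive lower density yields $\liminf_{T\to\infty}\nu_T\{\cdots\}>0$.

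For the final assertion, if $L(s)$ is hybridly \emph{strongly} universal the argument goes through verbatim except that $g$ need only lie in $H_0(K)$ rather than $H(K)$; since $g=(f-B)/A$ is automatically in $H_0(K)$ for any continuous-and-analytic $f$, irrespective of whether $f$ meets $B$, the hypothesis $f(s)\neq B(s)$ may be dropped. The principal obstacle is not any single estimate but the requirement that one \emph{single} shift $\tau$ simultaneously approximate $L$ by $g$ and freeze the phases of $A$ and $B$; this coupling is precisely what hybrid universality supplies, so the real substance lies in the reduction above, and in particular in engineering the integer-coefficient frequency basis $\beta_1,\dots,\beta_k$ so that the choice $\theta_j=0$ forces every phase $e^{-i\lambda_n\tau}$ and $e^{-i\mu_n\tau}$ toward $1$.
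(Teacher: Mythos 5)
Your argument is correct and follows essentially the same route the paper relies on: the theorem is quoted from the authors' earlier work, but the proof sketch given for Theorem \ref{th:napa2poly} is exactly your mechanism --- Bohr's almost-periodicity for general Dirichlet series obtained by applying Kronecker to a $\mathbb{Q}$-basis of the frequencies (your $\beta_j$ with integer-coefficient representations and $\theta_j=0$), coupled through hybrid universality with the approximation of the auxiliary target $g=(f-B)/A$. The reduction via $g$ and the three-term decomposition of $F(s+i\tau)-f(s)$ is the intended argument, and your handling of the strong-universality case (dropping $f\ne B$ since $g$ need only lie in $H_0(K)$) matches the paper's claim.
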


Using the above theorem the authors \cite{NaPa1} obtained the universality theorem for zeta-functions associated to symmetric matrices treated by Ibukiyama and Saito \cite{IbuSa1} (see Theorem \ref{th:napamm2}). By improving the method used in \cite{NaPa1}, the authors \cite{NaPa2} proved the universality theorem for the Euler-Zagier multiple zeta-functions (see Theorem \ref{th:ezuni1}).

\subsection{Contents}
In the present paper, we give the lower and upper bound for the number of zeros of certain polynomials of $L$-functions in Main Theorem 1 and Main Theorem 2, respectively. Main Theorem 1 implies that any polynomial of hybrid universal function with some conditions has infinitely many complex zeros off the critical line. It should be emphasized that there are many zeta-functions whose all zeros lie on a line when they do not satisfy the assumption of Main Theorem 1 (see Remark \ref{rem:any}).  

By using the main theorems, we show that a lot of zeta or $L$-functions have infinitely many complex zeros off the critical line. In Theorem \ref{th:m2}, we prove that the zeta-functions associated to symmetric matrices (\ref{eq:ibusa1}) treated by Ibukiyama and Saito in \cite{IbuSa1} have infinitely many zeros in $n/2 < \Re (s) < (n+1)/2$. Next we show that certain spectral zeta-functions have infinitely many zeros in Theorem \ref{th:spe1}. We also prove in Theorem \ref{th:t1} that the Euler-Zagier multiple zeta-functions (\ref{eq:defez}) have zeros in $D$. Furthermore, we show in Propositions \ref{pro:ibpzli}, \ref{pro:spzli} and \ref{pro:ezli} that the Lindel\"of hypothesis for these zeta-functions are equivalent to the Lindel\"of hypothesis for the Riemann zeta-function. Hence we can say that the Lindel\"of hypothesis for zeta-functions mentioned above are independent of the Riemann hypothesis for these zeta-functions.

Moreover, we prove in Theorem \ref{th:zb1} that the Barnes multiple zeta-functions (\ref{eq:Bardef}) when $\lambda_1 =\cdots = \lambda_r =1$ and $a$ is not algebraic irrational have infinitely many zeros in the strip $r -1/2 < \Re (s) < r$. By using this fact, we show in Theorem \ref{th:sz1} that the Shintani multiple zeta-functions (\ref{eq:shi}) have infinitely many zeros in $r-1/2<\Re(s_1)<r$ and $0 \le \Re (s_k)$, $2 \le k \le m$ under some extra conditions. As corollaries, we show the same for the Mordell multiple zeta-functions (\ref{eq:mmzdef}), the Euler-Zagier-Hurwitz type of multiple zeta-functions (\ref{eq:ezhdef}), and the Witten multiple zeta-functions (\ref{eq:wmdef}).

\section{Main Theorems}
\subsection{Statement of main theorems}
In this section, we state the main theorems which give the lower and upper bound for the number of zeros of certain polynomials of $L$-functions. We should mention that Ka\v{c}inskait\.e, Steuding, \v{S}iau\v{c}i\=unas and R.~\v{S}le\v{z}evi\v{c}ien\.e (see \cite[Theorem 2]{Kach}) showed a special case of the following theorem, namely that $P(L(s))$ has infinitely many zeros to the right of the critical line $\Re (s)=1/2$ when $P \in {\mathbb{C}} [X]$ of degree $\geq 1$ is not a monomial.

Recall $D := \{ s \in {\mathbb{C}} : 1/2 < \Re (s) <1\}$ . Let us denote by $\mathcal{D}_s$ the ring of all general Dirichlet series defined as (\ref{eq:gds}) which are absolutely convergent in the half-plane $\Re (s) > 1/2$. Let $P_s \in \mathcal{D}_s[X]$ be a polynomial whose coefficients are in $\mathcal{D}_s$.
\begin{mthm}
Suppose that a function $L(s)$ is hybridly universal, $P_s \in \mathcal{D}_s[X]$ is not a monomial but a polynomial with degree greater than zero. Then, the function $P_s(L(s))$ has infinitely many zeros in $D$. More precisely, for any $1/2 < \sigma_1 < \sigma_2 < 1$, there exists a constant $C>0$ such that for sufficiently large $T$, the function $P_s(L(s))$  has more than $CT$ nontrivial zeros in the rectangle $\sigma_1 < \sigma < \sigma_2$, $0 < t <T$. Moreover, if the function $L(s)$ is hybridly strongly universal, the assumption that $P_s$ is not a monomial can be relaxed. 
\end{mthm}

Note that the cases, for example, $\zeta (s) + \zeta (2s)$, $\zeta^2 (s) - \zeta (s)$ are included but the case $\zeta (2s) \zeta (s)$ is excluded in the above theorem by the assumption $P_s$ is not a monomial. We obtain Theorem \ref{th:01} by the above theorem since $\zeta (s,a)$ where $a \ne 1/2, 1$ is rational or transcendental, is hybridly strongly universal. 
\begin{remark}\label{rem:any}
It should be emphasized that any zeta-function satisfying the assumption of Main Theorem 1 has infinitely many complex zeros in $D$. On the other hand, Taylor \cite{Taylor} showed that
$$
\zeta^* (s+1/2)-\zeta^* (s-1/2), \qquad \zeta^* (s) := \pi^{-s/2} \Gamma (s/2) \zeta (s)
$$
has all its zeros on the critical line $\Re (s)=1/2$. Thus there is a linear combination of the Riemann zeta-function multiplied by the Gamma function whose Riemann hypothesis holds. Namely, we can construct a zeta-function whose all zeros lie on the critical line if we relax the assumption of Main Theorem 1. Note that there are many other zeta-functions which have all their zeros on a line (see for example, Hejhal \cite{Hej}. Lagarias and Suzuki \cite{LaSu}, and Ki \cite{Ki}).
\end{remark}

\begin{mthm}
Suppose that $L(s)$ is a function defined as a Dirichlet series for $\sigma>1$ which can be continued analytically to a meromorphic function on $\Re (s)>1/2$ with a finite number of poles and all of them lie on the straight line $\Re (s)=1$. Moreover assume $L(s)$ is a function of finite order and for any fixed $1/2< \Re (s)<1$ the square mean-value 
\[
\int_0^T |L(\sigma+it)|^2dt \ll T \quad \mbox{as} \quad T\to\infty .
\]
Then, for any polynomial $P_s \in {\mathcal{D}}_s$ and any $\sigma_0>1/2$ we have
\[
N_{P(L)}(\sigma_0,T) \ll T,
\] 
where $N_{P(L)}(\sigma_0,T)$ counts the number of zeros $\rho$ with multiplicities of $P_s(L(s))$ with $\Re (\rho)>\sigma_0$ and $0<\Im(\rho)<T$.
\end{mthm}

\begin{remark}
Let $0 \ne a \in {\mathbb{C}}$. Then following the reasoning of the proof of Main Theorems 1 and 2 for the function $L(s)-a$, one can prove so-called $a$-values results of $L$-functions, where $L(s)$ satisfies the assumptions of Main Theorems (see \cite[Section 11]{Titch}).
\end{remark}

\subsection{Proof of Main theorem 1}
In order to prove Main Theorem 1, we quote the following theorem proved by the authors \cite[Corollary 2.2]{NaPa2}. 
\begin{thm}\label{th:napa2poly}
Suppose that a function $L(s)$ is hybridly universal and $P_s \in \mathcal{D}_s[X]$ is a polynomial with degree greater than zero. Then, for any $\varepsilon>0$ and any function $f(s) \in H(K)$, it holds that
\begin{equation}\label{eq:mtmlem}
\liminf_{T\to\infty}\nu_T \left\{ \left\Vert P_{s+i\tau} \bigl(L(s+i\tau) \bigr) - P_s \bigr(f(s)\bigr) \right\Vert_K < \varepsilon \right\}>0.
\end{equation}
Moreover if the function $L(s)$ is hybridly strongly universal, then $P_s \bigr(f(s)\bigr)$ for any function $f \in H_0(K)$ can be approximated.
\end{thm}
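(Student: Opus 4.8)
The plan is to reduce the statement to the hybrid universality of $L(s)$ itself, treating the coefficients of $P_s$ as a perturbation that can be \emph{frozen} along suitable vertical shifts. Write $P_s(X) = \sum_{j=0}^{d} c_j(s) X^j$ with each $c_j(s) = \sum_{n} a_{j,n} e^{-\lambda_{j,n} s} \in \mathcal{D}_s$ and $d = \deg P_s \geq 1$. Fix a compact $K \subset D$ with $\Re(s) \geq \sigma_0 > 1/2$ throughout $K$, fix $f \in H(K)$ (or $f \in H_0(K)$ in the strong case), and set $R := \Vert f\Vert_K$. The difference to be controlled decomposes as
\[
P_{s+i\tau}\bigl(L(s+i\tau)\bigr) - P_s\bigl(f(s)\bigr) = \sum_{j=0}^{d}\Bigl[\bigl(c_j(s+i\tau)-c_j(s)\bigr)L(s+i\tau)^j + c_j(s)\bigl(L(s+i\tau)^j - f(s)^j\bigr)\Bigr],
\]
so it suffices to make both $\Vert c_j(s+i\tau)-c_j(s)\Vert_K$ and $\Vert L(s+i\tau)-f(s)\Vert_K$ small on a set of $\tau$ of positive lower density: then $L(s+i\tau)$ and $f$ are uniformly bounded, and $L^j-f^j$ factors through $L-f$.

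The key device is to choose $\tau$ so that the coefficient series are nearly frozen, $c_j(s+i\tau) \approx c_j(s)$, while simultaneously $L(s+i\tau) \approx f(s)$. First I would truncate: by absolute convergence at $\sigma_0$ the tail $\sum_{n>N}|a_{j,n}|e^{-\lambda_{j,n}\sigma_0}$ is small, and since $\Re(s+i\tau)=\Re(s)$ this truncation error is bounded uniformly in $\tau$. It then remains to handle the finitely many exponents $\{\lambda_{j,n}\}$ surviving truncation. As $c_j(s+i\tau)-c_j(s)$ collects the factors $e^{-i\lambda_{j,n}\tau}-1$, I want each $\lambda_{j,n}\tau/(2\pi)$ close to an integer. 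The obstruction is that these exponents need not be linearly independent over $\mathbb{Q}$, whereas hybrid universality only controls $\Vert\tau\alpha_l\Vert$ for a $\mathbb{Q}$-independent family. I would resolve this by extracting a $\mathbb{Q}$-basis $\beta_1,\dots,\beta_k$ of $\lin{\lambda_{j,n}}$, choosing a common denominator $Q$ so that every $\lambda_{j,n} = \sum_l m_{j,n,l}\,(\beta_l/Q)$ with $m_{j,n,l}\in\mathbb{Z}$, and setting $\alpha_l := \beta_l/(2\pi Q)$. These $\alpha_l$ are again linearly independent over $\mathbb{Q}$, so with targets $\theta_l=0$ the hybrid universality of $L$ supplies a set of $\tau$ of positive lower density on which $\Vert L(s+i\tau)-f(s)\Vert_K < \delta$ and $\Vert\tau\alpha_l\Vert < \delta$ for all $l$.

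The gain from rescaling by $Q$ is the crucial point: on this set $\lambda_{j,n}\tau/(2\pi) = \sum_l m_{j,n,l}\,\tau\alpha_l$, and since the $m_{j,n,l}$ are integers while each $\tau\alpha_l$ lies within $\delta$ of an integer, the whole sum lies within $\bigl(\sum_l|m_{j,n,l}|\bigr)\delta$ of an integer. Hence $e^{-i\lambda_{j,n}\tau}$ is within $O(\delta)$ of $1$ for \emph{every} relevant exponent, not merely for the basis ones, so $\Vert c_j(s+i\tau)-c_j(s)\Vert_K = O(\delta)$ after adding back the uniformly small tails. Substituting this together with $\Vert L(s+i\tau)-f(s)\Vert_K<\delta$ into the displayed decomposition and taking $\delta$ small in terms of $d$, $R$ and $\sum_{j,n}|a_{j,n}|e^{-\lambda_{j,n}\sigma_0}$ gives $\Vert P_{s+i\tau}(L(s+i\tau))-P_s(f(s))\Vert_K < \varepsilon$ on a set of positive lower density, which is precisely \eqref{eq:mtmlem}; the strong case is identical, using hybrid strong universality to admit any $f \in H_0(K)$. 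I expect the main technical obstacle to be exactly this taming of the $\mathbb{Q}$-linear dependence among the exponents via the common denominator $Q$, and the verification that the resulting integer-target Kronecker conditions freeze all coefficient series simultaneously and uniformly in $\tau$.
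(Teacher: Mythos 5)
Your proposal is correct and follows essentially the same route as the paper's own (sketched) proof of Theorem \ref{th:napa2poly}: you combine the universality half of the hybrid property with Bohr's almost-periodicity argument to freeze the coefficient series, imposing the Kronecker conditions $\Vert\tau\alpha_l\Vert<\delta$ on a $\mathbb{Q}$-basis of the truncated exponent set, and your common-denominator $Q$ merely makes explicit what the paper leaves implicit in ``replacing $\{\log p\}$ by the basis $\{\lambda_{n_k}\}$''. One microscopic repair: since the exponents $\lambda_{j,n}$ of a general Dirichlet series may be negative, the uniform tail bound on $K$ should invoke absolute convergence at both $\min_{s\in K}\Re(s)$ and $\max_{s\in K}\Re(s)$ (bounding $e^{-\lambda\sigma}$ by the sum of its values at the two endpoints), not at $\sigma_0$ alone.
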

\begin{proof}
For the reader's convenience, we give a sketch of the proof. H.~Bohr in \cite{Bohr22} showed that for any Dirichlet series $\sum_{n=1}^\infty a_n n^{-s}$ and every $\varepsilon>0$ we have for an arbitrary $s$ lying in the half-plane of absolute convergence that
\[
\lim_{T\to\infty} \nu_T \left\{ \biggl|\sum_{n=1}^\infty\frac{a_n}{n^{s+i\tau}}-\sum_{n=1}^\infty\frac{a_n}{n^{s}}\biggr|<\varepsilon\right\}>0.
\]
In fact, Bohr applied the Kronecker theorem only for the sequence $\{\log p\}$, where $p$ runs over all primes. To obtain an analogous 
result for general Dirichlet series defined as (\ref{eq:gds}) it suffices to replace the sequence $\{\log p\}\subset\{\log n\}$ by the sequence $\{\lambda_{n_k}\}\subset\{\lambda_n\}$, where $\{\lambda_{n_k}\}$ is the basis of the vector space over $\mathbb{Q}$ generated by all $\lambda_n$. Then since hybrid universality combines diophantine approximations and the universality property, we obtain the theorem. 
\end{proof}
\begin{proof}[Proof of Main Theorem 1]
Firstly, suppose that the function $L(s)$ is hybridly universal. Let $z\in {\mathbb{C}}$ and fix a complex number $s_0 \in D$. Then $P_{s_0}(z)$ is a polynomial of variable $z$. Let $z_0 \in {\mathbb{C}}$ be a root of the equation $P_{s_0}(z)=0$. By the assumption $P_s$ is not a monomial, we can take $z_0 \ne 0$. Then we consider the function $f(s) := z_0 \exp(s-s_0)$ in Theorem \ref{th:napa2poly}. Because of the definition of $z_0$, one has $P_{s_0} (f(s_0))=0$. 

Let ${\mathcal{K}}$ be the closed disk whose center is $s_0$ and the radius is $r$. Moreover assume that $P_s (f(s))$ does not have zeros on the boundary of ${\mathcal{K}}$. Then by (\ref{eq:mtmlem}), we have 
\begin{equation}\label{eq:mtmpf1}
\max_{|s-s_0|=r} \bigl| P_{s+i\tau} \bigl(L(s+i\tau) \bigr) - P_s \bigr(f(s)\bigr) \bigr| <
\min_{|s-s_0|=r} \bigl| P_s \bigr(f(s)\bigr) \bigr| .
\end{equation}
An application of Rouch\'e's theorem shows that whenever the inequality (\ref{eq:mtmpf1}) holds, the function $P_{s+i\tau} (L(s+i\tau))$ has a zero in the interior of ${\mathcal{K}}$ (see also \cite[Section 8.1]{S}). According to (\ref{eq:mtmlem}), the measure of such $\tau \in [0,T]$ is greater than $CT$. This proves the theorem (see also \cite[Proof of Theorem 8.4.7]{LauGa}).

If $P_s$ is a monomial and the function $L(s)$ is hybridly strongly universal, it is sufficient to consider the function $s-s_0$ instead of $z_0 \exp(s-s_0)$. 
\end{proof}

\subsection{Proof of Main theorem 2}
We use the following two lemmas to show Main Theorem 2. The first one is called Littlewood's lemma (see for example \cite[lemma 7.2]{S}).
\begin{lemma}[Littlewood]
Let $g(s)$ be regular in and upon the boundary of the rectangle $\mathcal{R}$ with vertices $a$, $a+iT$, $b+iT$, $b$, and not zero on $\Re (s) =b$. Denote by $\nu(\sigma,T)$ the number of zeros $\rho$ inside the rectangle with $\Re (\rho) >\sigma$ including those with $\Im (\rho) =T$ but not $\Im (\rho) =0$. Then 
\[
\int_{\mathcal{R}}\log g(s)ds = -2\pi i\int_a^b\nu(\sigma,T)d\sigma.
\]
\end{lemma}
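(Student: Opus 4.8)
The plan is to prove the identity by rendering the multivalued function $\log g(s)$ single-valued through horizontal slits issuing from the zeros of $g$, applying Cauchy's theorem on the resulting cut region, and then reading off the boundary integral as a sum of contributions coming from the jumps of $\Im\log g = \arg g$ across these slits.

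First I would fix a branch of $\log g$. Since $g$ is regular on the closed rectangle and does not vanish on the right side $\Re(s)=b$, the function $\log g$ can be defined continuously there and then continued into $\mathcal{R}$. The only obstruction to single-valuedness is the set of zeros of $g$ inside $\mathcal{R}$: encircling a zero $\rho=\beta_\rho+i\gamma_\rho$ of multiplicity $m_\rho$ once increases $\arg g$ by $2\pi m_\rho$. To remove this obstruction I would attach to each zero $\rho$ the horizontal slit joining it to the left side, namely the segment from $a+i\gamma_\rho$ to $\rho$. On the rectangle with all these slits deleted, $\log g$ admits a single-valued analytic branch, and across each slit the value of $\arg g$ jumps by $2\pi m_\rho$.

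Next I would apply Cauchy's theorem on the slit region: since $\log g$ is analytic there, its integral over the full boundary of that region vanishes. This boundary consists of $\partial\mathcal{R}$, traversed in the sense fixed by the ordering of the vertices $a,\,a+iT,\,b+iT,\,b$, together with, for each zero, a path running along the lower edge of the slit and back along the upper edge. Because $ds=d\sigma$ is real along a horizontal slit and the two edges differ by the constant jump $2\pi i\,m_\rho$ in $\log g$, the contribution of the slit attached to $\rho$ has magnitude $2\pi m_\rho(\beta_\rho-a)$, the factor $\beta_\rho-a$ being the length of the slit. Solving the vanishing-boundary relation for $\int_{\mathcal{R}}\log g(s)\,ds$ then yields
\[
\int_{\mathcal{R}}\log g(s)\,ds = -2\pi i \sum_{\rho} m_\rho(\beta_\rho - a),
\]
the sign being fixed by the orientation and the sum running over the zeros $\rho$ inside $\mathcal{R}$ counted with multiplicity. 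Finally I would convert the sum into an integral by the layer-cake identity: since $\nu(\sigma,T)=\sum_{\rho}m_\rho\mathbf{1}[\beta_\rho>\sigma]$ and every zero satisfies $a<\beta_\rho<b$, one has $\sum_\rho m_\rho(\beta_\rho-a)=\int_a^b\nu(\sigma,T)\,d\sigma$, which gives the asserted formula.

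The step I expect to demand the most care is the bookkeeping of the branch and the orientation that pin down the overall sign, together with the treatment of degenerate configurations. In particular I would need to verify that the jump of $\arg g$ across a slit is exactly $2\pi m_\rho$ with the correct sign relative to the chosen orientation of $\partial\mathcal{R}$, and to handle zeros sharing a common ordinate $\gamma_\rho$, whose slits overlap so that the jumps simply add. The hypotheses enter precisely here: non-vanishing on $\Re(s)=b$ is what anchors a branch of $\log g$ on the right side, and the stated convention that zeros with $\Im(\rho)=T$ are counted while those with $\Im(\rho)=0$ are not is exactly what is needed so that the half-open horizontal edges of $\mathcal{R}$ do not distort the count; a zero on the vertical side $\Re(s)=a$, should one occur, would be absorbed by a limiting perturbation of $a$.
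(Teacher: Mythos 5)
Your argument is correct: it is the classical proof of Littlewood's lemma, rendering $\log g$ single-valued by cutting the rectangle along horizontal slits from each zero to the left edge, applying Cauchy's theorem on the cut region, reading off the jump $2\pi m_\rho$ of $\arg g$ across each slit to get $\mp 2\pi i\sum_\rho m_\rho(\beta_\rho-a)$, and converting the sum to $\int_a^b\nu(\sigma,T)\,d\sigma$ by the layer-cake identity. The paper does not prove the lemma at all but quotes it from \cite[Lemma 7.2]{S}, where the proof is exactly this slit argument, so your route coincides with the intended one (with the positive orientation of $\partial\mathcal{R}$ pinning down the sign $-2\pi i$, and your limiting perturbations correctly absorbing zeros on the edges in accordance with the stated counting convention at $\Im(\rho)=0,T$).
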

\begin{lemma}\label{lem:TendsToNoZero}
Assume that $f(s)$ is defined as (\ref{eq:gds}) for $\Re (s) >\sigma_0$. Then there 
exists $\theta\in\mathbb{R}$ such that $e^{\theta s}f(s)\to c\ne 0$ as $\Re(s)\to\infty$. Particularly, there exists $\sigma_f\geq\sigma_0$ such that $f(s)\ne 0$ for $\Re(s)>\sigma_f$.
\end{lemma}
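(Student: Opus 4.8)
The plan is to isolate the term of $f$ with the smallest frequency and show that it dominates as $\Re(s)\to\infty$. Write $f(s)=\sum_{n=1}^\infty a_n e^{-\lambda_n s}$ as in (\ref{eq:gds}); I may assume $f$ is not identically zero, since otherwise the assertion (which demands a nonzero limit $c$) is vacuous. The crucial structural point is that for a general Dirichlet series the frequencies satisfy $\lambda_n\to+\infty$ (the analogue of $\lambda_n=\log n$ for an ordinary Dirichlet series), so only finitely many $\lambda_n$ lie below any given bound. Hence the nonempty set $\{\lambda_n:a_n\neq 0\}$ is bounded below and attains its infimum. Let $m$ be the index realizing $\lambda_m=\min\{\lambda_n:a_n\neq 0\}$, and set $\theta:=\lambda_m$ and $c:=a_m\neq 0$.

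With this choice one has
\[
e^{\theta s}f(s)=a_m+\sum_{n\neq m,\,a_n\neq 0} a_n e^{-(\lambda_n-\lambda_m)s},
\]
where every surviving exponent satisfies $\lambda_n-\lambda_m>0$. Fixing any $\sigma_1>\sigma_0$ at which the series converges absolutely, I would bound, for $\Re(s)=\sigma>\sigma_1$,
\[
\Bigl|\sum_{n\neq m} a_n e^{-(\lambda_n-\lambda_m)s}\Bigr|\le \sum_{n\neq m}|a_n| e^{-(\lambda_n-\lambda_m)\sigma},
\]
a quantity depending only on $\sigma$. This tends to $0$ as $\sigma\to\infty$, either by dominated convergence (each summand tends to $0$ and is dominated by the summable values at $\sigma_1$) or, more explicitly, by extracting a factor $e^{-(\lambda_n-\lambda_m)(\sigma-\sigma_1)}\le e^{-\delta(\sigma-\sigma_1)}$, where $\delta>0$ is the gap from $\lambda_m$ to the next frequency. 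Since the bound is independent of $t=\Im(s)$, this yields $e^{\theta s}f(s)\to c\neq 0$ \emph{uniformly} in $t$ as $\Re(s)\to\infty$.

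For the final assertion, uniform convergence to a nonzero limit provides some $\sigma_f\ge\sigma_0$ with $|e^{\theta s}f(s)-c|<|c|/2$ for all $s$ with $\Re(s)>\sigma_f$; hence $e^{\theta s}f(s)\neq 0$ there, and since $e^{\theta s}$ never vanishes, $f(s)\neq 0$ for $\Re(s)>\sigma_f$. The only genuinely load-bearing step is the existence of a least frequency $\lambda_m$: without the standing convention $\lambda_n\to+\infty$ it can fail (for instance $\lambda_n=-n$, $a_n=e^{-n^2}$ defines an entire general Dirichlet series that blows up as $\Re(s)\to\infty$, for which no $\theta$ works), so the argument must invoke that assumption. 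Once the minimal frequency is secured, the remaining estimates are routine.
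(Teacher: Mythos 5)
Your proof is correct and follows essentially the same route as the paper's: reduce to nonzero coefficients with distinct exponents, extract the minimal frequency $\lambda_m$, and observe that $e^{\lambda_m s}f(s)\to a_m\neq 0$ as $\Re(s)\to\infty$. You are in fact more careful than the paper on the two points it glosses over, namely the uniform estimate of the tail via absolute convergence and the fact that the existence of a minimal frequency rests on the standing convention $\lambda_n\to+\infty$, which the definition in (\ref{eq:gds}) does not state explicitly.
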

\begin{proof}
Without loss of generality, we can assume that $a_n\ne 0$ and $\lambda_n\ne\lambda_m$ for $n\ne m$. 
Put $\lambda_{m_0}:=\min\{\lambda_m: m=0,1,2,\ldots\}$. Obviously $\lambda_{m_0} > -\infty$. Then, if $\lambda_{m_0}>0$, we easily see that $e^{\lambda_{m_0}s}f(s)\to a_{m_0}\ne 0$ as $\Re (s)\to\infty$. Otherwise, we have $e^{-\lambda_{m_0}s}f(s)\to a_{m_0}\ne 0$  as $\Re (s)\to\infty$.
\end{proof}

\begin{proof}[Proof of Main Theorem 2]
Without loss of generality, by Lemma~\ref{lem:TendsToNoZero}, we can assume that there is $\sigma_2>1$ such that $F(s) := P_s(L(s))\ne 0$ for $\Re (s) >\sigma_2$.

Let $1/2<\sigma_1<\sigma_0$, and $T_0>0$ be such that $F(\sigma+it)$ is regular for $|t|>T_0$ and $\sigma>1/2$. Then using Littlewood's lemma and comparing the imaginary parts gives
\begin{align*}
2\pi\int_{\sigma_1}^{\sigma_2} N_F(\sigma,T)d\sigma &= \int_{T_0}^T\log|F(\sigma_1+it)|dt -\int_{T_0}^T\log|F(\sigma_2+it)|dt\\
&\quad-\int_{\sigma_1}^{\sigma_2}\arg F(\sigma+i(T_0+T))d\sigma+\int_{\sigma_1}^{\sigma_2}\arg F(\sigma+iT_0)d\sigma\\
&=I_1+I_2+I_3+I_4,\qquad\text{say}.
\end{align*}

One can easily show that $I_2+I_4\ll T$ (see for example \cite[the proof of Theorem 7.1]{S}).

Let $m$ be the degree of $P_s(X)$. By Jensen's inequality and convexity of logarithm we get
\begin{align*}
I_1 &= \frac{m}{2}\int_{T_0}^T \log|F(\sigma+it)|^{2/m}dt \ll 
T\log \frac{1}{T-T_0}\int_{T_0}^T |F(\sigma+it)|^{2/m}dt \\
&\ll T \log \frac{1}{T} \sum_{n=0}^m \int_0^T|L(\sigma+it)|^{2n/m}dt \ll 
T\log\frac{1}{T}\int_0^T|L(\sigma+it)|^2dt\ll T.
\end{align*}

In order to evaluate $I_3$ let us observe that $|\arg F(\sigma+i(T_0+T))|\leq (N+1)\pi$, where $N$ denotes the number of zeros of $\Re (F(\sigma+i(T_0+T)))$ for $\sigma\in[\sigma_1,\sigma_2]$. To estimate $N$, put
\[
g(z)=\frac{1}{2} \Bigl( F(z+i(T_0+T)) + \overline{F(\overline{z}+i(T_0+T))} \Bigr),
\]
$R=\sigma_2-\sigma_1$ and $T$ larger than $2R$. Then it is easy to see that $g(z)$ is analytic for $|z-\sigma_2|<T$ and $N\leq n(R)$, where $n(r)$ denotes the number of zeros of $g(z)$ in $|z-\sigma_2|\leq r$. Then using Jensen's formula (see \cite[\S~3.61]{Titch}), the fact that $L(s)$ is a function of finite order and the following inequality
\[
\int_0^{2R}\frac{n(r)}{r}dr\geq n(R)\log 2
\]
gives the estimate $I_3\ll \log T$.

Collecting all estimates, we obtain
\[N_F(\sigma_0,T)\leq \frac{1}{\sigma_0-\sigma_1}\int_{\sigma_1}^{\sigma_0}N_F(\sigma,T)d\sigma \ll \int_{\sigma_1}^{\sigma_2}N_F(\sigma,T)\ll T,\]
which completes the proof.
\end{proof}

\section{Applications}

\subsection{Zeros of the zeta-functions associated to symmetric matrices}
Zeta-functions associated to the prehomogeneous vector space $V_n$ of $n \times n$ rational symmetric matrices are  interesting for several reasons. For example, their special values at non-positive integers describe the contribution of central unipotent elements to the dimensions of the Siegel modular form. For the case $n=2$, these zeta-functions have been investigated by Siegel, Morita, Shintani, Sato and Arakawa. For the case $n \ge 3$, explicit forms of the zeta-functions associated to symmetric matrices have been proved by Ibukiyama and Saito \cite{IbuSa1}. 

We use the same definitions and notation as in \cite{IbuSa1}. For each field $F$ in ${\mathbb{C}}$, we put $V_n (F) = V_n \otimes_{\mathbb{Q}} F$. Then the pair $(GL_n ({\mathbb{C}}), V_n ({\mathbb{C}}))$ is a prehomogeneous vector space through the action: $V_n ({\mathbb{C}}) \in x \longmapsto g x^t g \in V_n ({\mathbb{C}})$ for $g \in GL_n ({\mathbb{C}})$. Denote by $V_n^j$ the subset of  $V_n ({\mathbb{R}})$ consisting of matrices with $j$ positive and $n-j$ negative eigenvalues. We fix a lattice $L \subset V_n ({\mathbb{R}})$ which is invariant under $SL_n ({\mathbb{Z}})$ and put $L^{(j)} := L \cap V_n^j$ and denote by $L^{(j)}\!/\!\!\sim$ the set of $SL_n ({\mathbb{Z}})$ equivalence classes in $L^{(j)}$. Then, except for the case $n=2$ and $j=1$, the zeta-functions $\zeta_j (s,L)$ of signature $(j, n-j)$ attached to $L$ are given by the following series
$$
\zeta_j (s,L) := \frac{2 \prod_{k=1}^n \Gamma(k/2)}{\pi ^{n(n+1)/4}}
\sum_{x \in L^{(j)} / \sim} \frac{\mu (x)}{|\det x|^s}
$$
where $\mu (x)$ is defined as follows: let $G_+ := \{ g \in GL_n ({\mathbb{R}}) : \det g >0 \}$, and  $dg := (\det g)^{-n} \prod_{1 \le j,k \le n} dg_{jk}$ be the measure on $G_+$. As a measure on $V_n^j$, we take $dy := \prod_{1 \le j \le k \le n} dy_{jk}$. For $x \in L^{(j)}$, let $U$ be a relatively compact open set in $V_n^j$ and let $Y$ be the domain in $G_+$ which is mapped to $U$ by $x \longmapsto g x^t g$. Let $\Gamma_x$ be the stabilizer of $x$ in $SL_n ({\mathbb{Z}})$, and $Y_0$ a fundamental domain with respect to the right action of $\Gamma_x$. The ratio $\mu (x) := \int_{Y_0} dg / \int_U |y|^{-(n+1)/2} dy$ is finite and independent of the choice of $U$ unless $n=2$ and $j=1$. For a ring $R$, we denote by $S_n (R)$ the set of all symmetric matrices with coefficients in $R$ and by $S_n (R)_e$ its subset consisting of even elements, that is, the elements whose diagonal elements are contained in $2R$. By \cite[Lemma 1.1]{IbuSa1}, it is enough to consider the cases $L_n := S_n ({\mathbb{Z}})$ or $L_n^* := S_n ({\mathbb{Z}})_e/2$ when $n \ge 3$. Note that $\zeta_j (s,L)$, where $L=L_n$ or $L_n^*$ depends only on the determinant $\eta := (-1)^{n-j}$ and the Hasse invariant $\theta := (-1)^{(n-j)(n-j+1)/2}$ of $x \in V_n^j$ (see \cite[Section 2]{IbuSa1}). Hence we denote
$$
\zeta_{\eta, \theta} (s,L) := \zeta_j (s,L)
$$
for $L_n$ or $L_n^*$. For the zeta-functions associated to symmetric matrices $\zeta_{\eta, \theta}(s,L)$, the following theorem is proved by Ibukiyama and Saito \cite{IbuSa1}. For simplicity, we put
\begin{align*}\label{eq:bnBn}
b_n(s;L) &= \begin{cases}
|\prod_{k=1}^{[n/2]} B_{2k}| \Big/ 2^{n-1} (\frac{n-1}{2})! &\text{if $L=L_n$},\\
2^{(n-1)s} |\prod_{k=1}^{[n/2]} B_{2k}| \Big/ 2^{n-1} (\frac{n-1}{2})! &\text{if $L=L^*_n$},
\end{cases}\\
A_n(s;L) &= \begin{cases} 2^{(n-1)/2} \prod_{k=1}^{[n/2]} \zeta \bigl( 2s - (2k-1) \bigr) &\text{if $L=L_n$},\\
\prod_{k=1}^{[n/2]} \zeta \bigl( 2s - (2k-1) \bigr) &\text{if $L=L^*_n$},
\end{cases}\\
B_n (s) &= \theta \eta^{(n+1)/2} (-1)^{(n^2-1)/8} \zeta (s) \prod_{k=1}^{[n/2]} \zeta ( 2s - 2k ).
\end{align*}

\begin{thm}[{\cite[Theorem 1.2]{IbuSa1}}]\label{th:IbuSa1}
Let $n \ge 3$ be an odd positive integer. Then we have
\begin{equation}\label{eq:ibusa1}
\zeta_{\eta, \theta}(s,L) = b_n(s;L) \bigl(A_n(s;L)\zeta(s-(n-1)/2)+B_n(s)\bigr).
\end{equation}
\end{thm}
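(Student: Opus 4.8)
The plan is to compute the Dirichlet series $\sum_{x\in L^{(j)}/\sim}\mu(x)|\det x|^{-s}$ explicitly as an Euler product over primes, and then to factor that product into shifted Riemann zeta-functions. First I would reorganize the sum by the value of the determinant: collecting the archimedean weights $\mu(x)$ over all classes $x$ with $|\det x|=m$ produces a Dirichlet series $\sum_{m\geq 1} a_m m^{-s}$, where $a_m$ is a weighted count of $SL_n(\mathbb{Z})$-classes of integral symmetric matrices of determinant $\pm m$ carrying the prescribed signature data $(\eta,\theta)$. Up to the archimedean normalization, the coefficients $a_m$ are the masses attached to genera of quadratic forms, and the central structural input is that they are \emph{multiplicative} in $m$: genera of symmetric matrices are classified by local data at each prime, and this local--global (strong approximation) description yields the Euler product $\sum_m a_m m^{-s} = \prod_p L_p(s)$.

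The second step is to evaluate each local factor $L_p(s)$. For a fixed prime this is a local density, namely a weighted count of symmetric matrices over $\mathbb{Z}_p$ graded by $|\det|_p$, which is a classical rational function of $p^{-s}$ computed by Siegel and Kitaoka. For $n$ odd and $p$ odd one obtains, after summing the geometric series over powers of $p$, a rational function whose denominator is a product of factors $1-p^{(2k-1)-2s}$ and $1-p^{2k-2s}$ together with one factor $1-p^{(n-1)/2-s}$, while the numerator splits into exactly two geometric pieces reflecting a quadratic Gauss-sum dichotomy already visible in the local computation. Taking the product over all finite $p$ then turns these two pieces into the two summands of (\ref{eq:ibusa1}): via $\prod_p(1-p^{-w})^{-1}=\zeta(w)$ the denominators assemble into the products $\prod_{k=1}^{[n/2]}\zeta(2s-(2k-1))$ and $\prod_{k=1}^{[n/2]}\zeta(2s-2k)$ occurring in $A_n(s;L)$ and $B_n(s)$, while the extra factor contributes the shift $\zeta(s-(n-1)/2)$.

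The third step is to pin down the normalizing constants and the sign. The archimedean Gamma-factor $2\prod_{k=1}^n\Gamma(k/2)/\pi^{n(n+1)/4}$ together with the definition of $\mu(x)$ accounts for the real place and leaves a clean global Dirichlet series, and the combinatorial constant $|\prod_{k=1}^{[n/2]}B_{2k}|/2^{n-1}((n-1)/2)!$ is then read off from the assembled product, the Bernoulli numbers entering through the special values of $\zeta$ at even integers used to normalize the local densities and the group volumes. The distinction between $L=L_n$ and $L=L_n^*$ affects only the dyadic factor $L_2(s)$ together with an overall rescaling of the determinant, which is precisely what produces the factor $2^{(n-1)s}$ and the altered leading constant $2^{(n-1)/2}$ in $A_n(s;L_n)$. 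Finally, the sign $\theta\eta^{(n+1)/2}(-1)^{(n^2-1)/8}$ in $B_n(s)$ is the global quadratic root number attached to the discriminant, expressed through $\eta$ and $\theta$ via the archimedean and $2$-adic contributions; alternatively it can be fixed by invoking the Sato--Shintani matrix functional equation, which relates the finitely many components $\zeta_{\eta,\theta}$ for the different $(\eta,\theta)$ and, together with the explicit Dirichlet coefficients, determines each closed form.

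I expect the main obstacle to be the explicit evaluation of the local densities and their factorization: one must verify that the two-branch structure of the $p$-adic densities globalizes exactly into the two-term shape $A_n(s;L)\zeta(s-(n-1)/2)+B_n(s)$, and one must control the dyadic place $p=2$, where the even-diagonal condition distinguishing $L_n^*$ from $L_n$ forces a separate and more delicate computation. An alternative route that sidesteps some of this bookkeeping is induction on $n$: block-decomposing an $n\times n$ symmetric matrix and using reduction theory for $SL_n(\mathbb{Z})$ to express $\zeta_{\eta,\theta}(s,L)$ through the analogous zeta-function for $(n-2)\times(n-2)$ matrices, which directly generates the products over $k$. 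The difficulty then shifts to setting up the recursion cleanly and to handling the boundary (degenerate determinant) contributions that split off the second summand.
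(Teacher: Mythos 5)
This statement is not proved in the paper at all: it is quoted verbatim from Ibukiyama and Saito \cite[Theorem 1.2]{IbuSa1} and used as a black box to feed Main Theorems 1 and 2. So there is no internal argument to compare yours against; the only question is whether your sketch would actually establish the Ibukiyama--Saito formula.

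As it stands it would not, though the strategy you describe (reduce the weighted class sum to genus masses, invoke the local--global description of genera to get an Euler product, evaluate the local densities, reassemble into shifted zeta values) is indeed the right general framework and is close in spirit to how such explicit forms are obtained. The gaps are exactly at the points you yourself flag as ``the main obstacle,'' and they are not bookkeeping: (i) the passage from the sum over $SL_n(\mathbb{Z})$-classes weighted by $\mu(x)$ to a multiplicative arithmetic function requires a precise mass-formula identity for this particular volume normalization, which you assert rather than prove; (ii) the claim that each local factor splits into ``exactly two geometric pieces'' whose global products are $A_n(s;L)\zeta(s-(n-1)/2)$ and $B_n(s)$ is the entire content of the theorem --- note that for $n$ even the analogous statement fails (an Eisenstein-series Dirichlet series appears), so the dichotomy cannot be a formal consequence of the setup and must be extracted from the actual local density formulas for odd $n$; (iii) the dyadic computation separating $L_n$ from $L_n^*$ and the determination of the sign $\theta\eta^{(n+1)/2}(-1)^{(n^2-1)/8}$ are delicate and left entirely open. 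In short, you have reproduced a plausible table of contents for the Ibukiyama--Saito proof, not a proof; for the purposes of this paper the correct move is simply to cite \cite{IbuSa1}, as the authors do.
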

They also showed in \cite[Theorem 1.3]{IbuSa1} that when $n$ is even, these zeta-functions consist of two parts; one part is a product of shifted Riemann zeta-functions and the other part is composed of shifted Riemann zeta-functions and the Dirichlet series attached to an Eisenstein series of half integral weight of one variable. 

The universality theorem for $\zeta_{\eta, \theta}(s,L)$, where $n \ge 3$ is as follows. 
\begin{thm}[{\cite[Theorem 4]{NaPa1}}]\label{th:napamm2}
Assume that $L=L_n$ or $L^*_n$ and $n\ge 3$ is an odd positive integer. Let $K \subset D_n := \{s \in {\mathbb{C}} : n/2 < \Re (s) < (n+1)/2 \}$ be a compact set with connected complement. Moreover, let $f(s)$ be a function continuous on $K$ and analytic in the interior of $K$, such that $f(s)\ne b_n(s;L)B_n(s)$ for all $s\in K$. Then for any $\varepsilon > 0$, 
\begin{equation*}
\liminf_{T \rightarrow \infty} \nu_T \bigl\{ \Vert \zeta_{\eta, \theta} (s+ i\tau, L) - f(s) \Vert_K < \varepsilon \bigr\} > 0 .
\end{equation*}
\end{thm}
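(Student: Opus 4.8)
The plan is to reduce the statement to Theorem \ref{th:napamm1} with $L(s) = \zeta(s)$, the Riemann zeta-function, which is hybridly universal. First I would invoke the Ibukiyama--Saito formula (\ref{eq:ibusa1}) from Theorem \ref{th:IbuSa1} to write
\[
\zeta_{\eta,\theta}(s,L) = b_n(s;L)A_n(s;L)\,\zeta\!\left(s-\tfrac{n-1}{2}\right) + b_n(s;L)B_n(s).
\]
The decisive observation is that the shift $s \mapsto w := s - (n-1)/2$ sends the strip $D_n$ exactly onto $D$: if $n/2 < \Re(s) < (n+1)/2$, then $1/2 < \Re(w) < 1$. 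Thus the single factor $\zeta(s-(n-1)/2) = \zeta(w)$ becomes the Riemann zeta-function on its critical strip, which carries the universality, while all remaining zeta-factors appearing in $A_n$ and $B_n$ get pushed into the half-plane of absolute convergence.

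Set $G(w) := \zeta_{\eta,\theta}(w + \tfrac{n-1}{2}, L)$ and group the terms as $G(w) = \widetilde A(w)\zeta(w) + \widetilde B(w)$, where $\widetilde A(w) := b_n(w + \tfrac{n-1}{2};L)A_n(w + \tfrac{n-1}{2};L)$ and $\widetilde B(w) := b_n(w + \tfrac{n-1}{2};L)B_n(w + \tfrac{n-1}{2})$. To apply Theorem \ref{th:napamm1} I must check that $\widetilde A$ and $\widetilde B$ are not identically vanishing general Dirichlet series absolutely convergent for $\Re(w) > 1/2$, and that $\widetilde A$ is non-vanishing on $D$. A direct substitution, using that $n$ is odd so $[n/2] = (n-1)/2$, shows that the zeta-factors of $A_n(w + \tfrac{n-1}{2};L)$ have arguments $2w + n - 2k$ for $k = 1, \ldots, (n-1)/2$, the smallest being $2w + 1$, while the factors of $B_n(w + \tfrac{n-1}{2})$ are $\zeta(w + \tfrac{n-1}{2})$ together with the $\zeta(2w + n - 1 - 2k)$, the critical one being $\zeta(2w)$. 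Since $\Re(w) > 1/2$ forces every such argument to have real part $> 1$, each factor is an absolutely convergent Dirichlet series there; the prefactor $b_n$ is either a nonzero constant or the entire function $2^{(n-1)w}$ times a constant, which settles the convergence and non-triviality.

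The main point---and the only place where something could genuinely fail---is the non-vanishing of $\widetilde A$ on $D$, which is precisely the hypothesis imposed on $A(s)$ in Theorem \ref{th:napamm1}. Here I would use the Euler product: every factor of $A_n(w + \tfrac{n-1}{2};L)$ has argument with real part $\ge \Re(2w + 1) > 2 > 1$, where $\zeta$ has no zeros, and $b_n$ never vanishes, so $\widetilde A(w) \ne 0$ throughout $D$; this is exactly what the oddness of $n$ and the shift by $(n-1)/2$ buy us. Finally, writing $\widetilde f(w) := f(w + \tfrac{n-1}{2})$ and $K' := K - (n-1)/2 \subset D$, the hypothesis $f(s) \ne b_n(s;L)B_n(s)$ on $K$ translates into $\widetilde f(w) \ne \widetilde B(w)$ on $K'$, so Theorem \ref{th:napamm1} applies to $G(w) = \widetilde A(w)\zeta(w) + \widetilde B(w)$ and yields a positive lower density of $\tau$ with $\Vert G(w + i\tau) - \widetilde f(w)\Vert_{K'} < \varepsilon$. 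Since $\Vert\zeta_{\eta,\theta}(s + i\tau, L) - f(s)\Vert_K = \Vert G(w + i\tau) - \widetilde f(w)\Vert_{K'}$, the theorem follows.
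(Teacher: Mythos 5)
Your proposal is correct and follows exactly the route the paper indicates: the theorem is quoted from \cite{NaPa1}, where it is derived by applying Theorem \ref{th:napamm1} (with the hybridly universal $L(s)=\zeta(s)$, after the shift $s\mapsto s-(n-1)/2$) to the Ibukiyama--Saito decomposition (\ref{eq:ibusa1}), with $A=b_nA_n$ and $B=b_nB_n$. Your verification that the shifted factors of $A_n$ and $B_n$ all land in the region of absolute convergence and that $b_nA_n$ is non-vanishing on $D$ via the Euler product is precisely the check needed, so nothing is missing.
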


We have that $\zeta_{\eta, \theta} (s,L_n)$ and $\zeta_{\eta, \theta} (s,L_n^*)$, $n \in 2{\mathbb{N}}+1$ has zeros by the equation (\ref{eq:ibusa1}) and Main Theorems. It should be noted that zeta-functions associated to symmetric matrices have functional equations (see \cite[Introduction]{IbuSa1}). 
\begin{theorem}\label{th:m2}
Assume that $L=L_n$ or $L^*_n$ and $n\ge 3$ is an odd positive integer. Then for any $n/2 < \sigma_1 < \sigma_2 < (n+1)/2$, the zeta-function $\zeta_{\eta, \theta} (s,L)$ has $\asymp T$ nontrivial zeros in the rectangle $\sigma_1 < \sigma < \sigma_2$, $0 < t <T$. 
\end{theorem}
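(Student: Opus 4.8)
The plan is to reduce Theorem~\ref{th:m2} to the two Main Theorems applied to the Riemann zeta-function $L(s)=\zeta(s)$, using the explicit factorization (\ref{eq:ibusa1}). First I would record that the prefactor $b_n(s;L)$ never vanishes: for $L=L_n$ it is a nonzero constant, while for $L=L_n^*$ it is a nonzero constant times $2^{(n-1)s}$, an entire zero-free general Dirichlet series. Hence in the strip $n/2<\Re(s)<(n+1)/2$ the zeros of $\zeta_{\eta,\theta}(s,L)$ coincide, with multiplicity, with the zeros of $A_n(s;L)\zeta(s-(n-1)/2)+B_n(s)$.

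Next I would perform the shift $w=s-(n-1)/2$, which carries $n/2<\Re(s)<(n+1)/2$ onto $D=\{1/2<\Re(w)<1\}$ and preserves imaginary parts. Writing $\tilde b(w),\tilde A(w),\tilde B(w)$ for $b_n,A_n,B_n$ evaluated at $w+(n-1)/2$, we obtain $\zeta_{\eta,\theta}(w+(n-1)/2,L)=\tilde b(w)\bigl(\tilde A(w)\zeta(w)+\tilde B(w)\bigr)$. The crucial verification is that $\tilde b,\tilde A,\tilde B\in\mathcal{D}_w$, and this is exactly where the bookkeeping of the shift matters. In $\tilde A$ the factors $\zeta(2s-(2k-1))$ become $\zeta(2w+n-2k)$ with $n-2k\ge 1$, so their arguments have real part $2\Re(w)+(n-2k)>1+1=2$; in $\tilde B$ the factor $\zeta(s)$ becomes $\zeta(w+(n-1)/2)$ with real part exceeding $n/2\ge 3/2$, while $\zeta(2s-2k)$ becomes $\zeta(2w+(n-1)-2k)$ with $(n-1)-2k\ge 0$, so that the sharpest case $k=(n-1)/2$ gives $\zeta(2w)$, absolutely convergent precisely for $\Re(w)>1/2$. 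Thus every auxiliary zeta-factor is an absolutely convergent ordinary Dirichlet series for $\Re(w)>1/2$, and since $\mathcal{D}_w$ is a ring their finite products, together with the exponential $2^{(n-1)w}$, again belong to $\mathcal{D}_w$.

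With this in place I would set $P_w(X):=\tilde b(w)\tilde A(w)X+\tilde b(w)\tilde B(w)\in\mathcal{D}_w[X]$, so that $P_w(\zeta(w))=\zeta_{\eta,\theta}(w+(n-1)/2,L)$. It has degree one, and it is not a monomial, because the leading constant $\theta\eta^{(n+1)/2}(-1)^{(n^2-1)/8}$ in $B_n$ is $\pm 1\ne 0$ (for odd $n$ the exponent $(n^2-1)/8$ is a nonnegative integer), whence $\tilde b\tilde B\not\equiv 0$. Since $\zeta$ is a Dirichlet $L$-function it is hybridly universal by \cite{PankowskiHybrid}, so Main Theorem~1 applies with $\sigma_i':=\sigma_i-(n-1)/2\in(1/2,1)$ and produces more than $CT$ zeros of $P_w(\zeta(w))$ in the rectangle $\sigma_1'<\Re(w)<\sigma_2'$, $0<\Im(w)<T$. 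Undoing the shift gives the lower bound $\gg T$ for the zeros of $\zeta_{\eta,\theta}(s,L)$ in $\sigma_1<\sigma<\sigma_2$, $0<t<T$.

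For the matching upper bound I would check that $\zeta(w)$ satisfies the hypotheses of Main Theorem~2: it is a Dirichlet series for $\Re(w)>1$, continues meromorphically to $\Re(w)>1/2$ with its single pole at $w=1$ on the line $\Re(w)=1$, is of finite order, and obeys the classical bound $\int_0^T|\zeta(\sigma+it)|^2\,dt\ll T$ for each fixed $1/2<\sigma<1$. Hence Main Theorem~2 yields $N_{P(\zeta)}(\sigma_1',T)\ll T$, and in particular the number of zeros in $\sigma_1'<\Re(w)<\sigma_2'$, $0<\Im(w)<T$, is $\ll T$. Combining the two bounds and translating back to the variable $s$ gives $\asymp T$ nontrivial zeros, as claimed. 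I expect the only genuinely delicate point to be the middle step, namely confirming that all shifted auxiliary zeta-factors land strictly inside the half-plane of absolute convergence $\Re(w)>1/2$, so that the coefficients truly lie in $\mathcal{D}_w$ and $P_w$ is genuinely non-monomial; the remainder is a direct invocation of Main Theorems~1 and~2.
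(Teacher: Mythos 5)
Your proposal is correct and follows exactly the route the paper intends: the paper derives Theorem \ref{th:m2} in one line from the factorization (\ref{eq:ibusa1}) together with Main Theorems 1 and 2, and your write-up simply supplies the verifications (shift to $D$, coefficients in $\mathcal{D}_s$, non-monomiality via the nonvanishing constant in $B_n(s)$, hybrid universality and the mean-square bound for $\zeta$) that the paper leaves implicit. No discrepancy with the paper's argument.
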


\subsection{Zeros of spectral zeta-functions}
Since the latter half of 1980's, the problem of evaluating the determinants of the Laplacians on Riemann manifolds has received considerable attention by many researchers including D'hoker and Phong, Sarnak, and Voros, who computed the determinants of the Laplacians on compact Riemann surfaces of constant curvature in terms of special values of the Selberg zeta-function. 

Let $\{\lambda_n\}$ be a sequence such that 
\begin{equation}\label{eq:lamdef}
0= \lambda_0 < \lambda_1 \le \lambda_2 \le \cdots \le \lambda_k \le \cdots, \qquad \lambda_k \to \infty, \quad 
k \to \infty; 
\end{equation}
henceforward we consider only such nonnegative increasing sequences. Then one can show that the spectral zeta function
$$
Z (s) := \sum_{k=1}^{\infty} \frac{1}{\lambda_k^{s}},
$$
which is known to converge absolutely in a half-plane $\Re (s) > \sigma_0$ for some $\sigma_0 \in {\mathbb{R}}$. Osgood, Phillips and Sarnak \cite{OPS} defined the determinant of the Laplacian $\Delta$ on the compact manifold $M$ by ${\rm{det}}' \Delta := \prod _{\lambda_k \ne 0} \lambda_k$, where $\{ \lambda_k \}$ is the sequence of eigenvalues of the the Laplacian $\Delta$ on $M$. The sequence $\{ \lambda_k \}$ is known to satisfy the condition (\ref{eq:lamdef}), but the product $\prod _{\lambda_k \ne 0} \lambda_k$ is always divergent. In order to make sense for the product, we must use some sort of regularization. It is easily seen that, formally, $e^{-Z'(0)}$ is the product of nonzero eigenvalues of $\Delta$. This product does not converge, but $Z(s)$ can continued analytically to a neighborhood of $s=0$. Thus we can give a meaningful definition
$$
{\rm{det}}' \Delta := \exp \bigl( - Z'(0) \bigr),
$$
which is called the Functional Determinant of the Laplacian $\Delta$ on $M$. 

We consider the sequence of eigenvalues on the standard Laplacian $\Delta_{{\mathbf{S}}^n}$ on the $n$-dimensional sphere ${\mathbf{S}}^n$ (see for example \cite[Chapter 8.3]{Tay}). The Laplacian $\Delta_{{\mathbf{S}}^n}$ has eigenvalues $\mu_k := k(k+n-1)$ with multiplicity
\begin{equation}\label{eq:muleigen}
\binom{k+n}{n} - \binom{k+n-2}{n} = \frac{(2k+n-1)(k+n-2)!}{k! (n-1)!}, \qquad k \in {\mathbb{N}}. 
\end{equation}
From now on we consider the shifted sequence $\{ \lambda_k \}$ of $\{ \mu_k \}$ by $(n-1)^2/4$ as a fundamental sequence. Then the sequence $\{ \lambda_k \}$ is written in the following tractable form:
\begin{equation}\label{eq:eigen2}
\lambda_k := \mu_k + \left( \frac{n-1}{2} \right)^2 = \left( k+ \frac{n-1}{2} \right)^2
\end{equation}
with the same multiplicity as in (\ref{eq:muleigen}). Hence their corresponding spectral zeta-functions $Z_{{\mathbf{S}}^n} (s)$ , $n= 1,2,3$ are as follows (see \cite[Chapter 5]{Sr}):
\begin{equation*}
\begin{split}
&Z_{{\mathbf{S}}^1} (s) = 2 \zeta (2s), \qquad Z_{{\mathbf{S}}^2} (s) = \bigl(2^{2s}-2\bigr)\zeta (2s-1) - 4^s, 
\qquad Z_{{\mathbf{S}}^3} (s) = \zeta (2s-2) -1,\\
&Z_{{\mathbf{S}}^4}(s) = \frac{1}{3} \bigl(2^{2s-3}-1\bigr) \zeta (2s-3) - 
\frac{1}{3} \biggl(2^{2s-3}-\frac{1}{4}\biggr) \zeta (2s-1) - \frac{1}{3} \biggl(\frac{2}{3}\biggr)^{2s-3} +
\frac{1}{8} \biggl(\frac{2}{3}\biggr)^{2s} .
\end{split}
\end{equation*}

By the definition of the sequence $\{ \lambda_k \}$ and their multiplicity, we have
\begin{equation}\label{eq:zns}
Z_{{\mathbf{S}}^n} (s) = A_{{\mathbf{S}}^n} (s) \zeta (2s-n+1) + B_{{\mathbf{S}}^n} (s) ,
\end{equation}
where $A_n (s)$ is a Dirichlet polynomial and $B_n(s)$ is a general Dirichlet series which is absolutely convergent in the half-plane $\sigma > n/2-1/4$ (see also the proof of Proposition \ref{pro:spzli}). Therefore we obtain the following theorem by Main Theorems. 
\begin{theorem}\label{th:spe1}
Let $n \ge 2$, Then for any $n/2-1/4 < \sigma_1 < \sigma_2 < n/2$ the spectral zeta-function $Z_{{\mathbf{S}}^n} (s)$ has $\asymp T$ nontrivial zeros in the rectangle $\sigma_1 < \sigma < \sigma_2$, $0 < t <T$. 
\end{theorem}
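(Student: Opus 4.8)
The plan is to deduce both the lower and the upper bound from the two Main Theorems after an affine change of variable. Put $w = 2s - n + 1$, equivalently $s = (w+n-1)/2$; this map sends the strip $n/2 - 1/4 < \Re(s) < n/2$ bijectively onto the critical strip $D = \{1/2 < \Re(w) < 1\}$ and scales imaginary parts by a factor $2$. Using the representation (\ref{eq:zns}), I would introduce
\[
G(w) := Z_{{\mathbf{S}}^n}\!\left(\frac{w+n-1}{2}\right) = A_{{\mathbf{S}}^n}\!\left(\frac{w+n-1}{2}\right)\zeta(w) + B_{{\mathbf{S}}^n}\!\left(\frac{w+n-1}{2}\right).
\]
Since $A_{{\mathbf{S}}^n}$ is a Dirichlet polynomial, its composition with the substitution is an entire general Dirichlet series in $w$; and since $B_{{\mathbf{S}}^n}$ converges absolutely for $\Re(s) > n/2 - 1/4$, the substituted constant term converges absolutely exactly for $\Re(w) > 1/2$ and so lies in $\mathcal{D}_w$. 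Consequently $G(w) = P_w(\zeta(w))$ for a degree-one polynomial $P_w \in \mathcal{D}_w[X]$, and the zeros of $G$ in a rectangle correspond bijectively to those of $Z_{{\mathbf{S}}^n}$ in the image rectangle.

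For the lower bound I would apply Main Theorem 1 to $G(w) = P_w(\zeta(w))$. The Riemann zeta-function is hybridly universal, being a Dirichlet $L$-function covered by the hybrid universality results recalled in the introduction, and $P_w$ fails to be a monomial precisely because its constant coefficient $B_{{\mathbf{S}}^n}((w+n-1)/2)$ does not vanish identically. Main Theorem 1 then produces at least $C T'$ zeros of $G$ in the box $2\sigma_1 - n + 1 < \Re(w) < 2\sigma_2 - n + 1$, $0 < \Im(w) < T'$; undoing the substitution (with $T' = 2T$) yields $\gg T$ zeros of $Z_{{\mathbf{S}}^n}$ in the rectangle $\sigma_1 < \Re(s) < \sigma_2$, $0 < \Im(s) < T$.

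For the upper bound I would apply Main Theorem 2 to the same $G(w)$. The function $\zeta$ meets all of its hypotheses: it is a Dirichlet series for $\Re(w) > 1$, continues meromorphically to $\Re(w) > 1/2$ with a single pole at $w = 1$ on the line $\Re(w) = 1$, is of finite order, and satisfies the classical second-moment estimate $\int_0^T |\zeta(\sigma+it)|^2 dt \ll T$ for fixed $1/2 < \sigma < 1$. Main Theorem 2 then gives $N_G(\sigma_0', T') \ll T'$ for every $\sigma_0' > 1/2$; choosing $\sigma_0' = 2\sigma_1 - n + 1 > 1/2$ and reverting the substitution bounds the number of zeros of $Z_{{\mathbf{S}}^n}$ in the rectangle by $\ll T$. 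Combining the two estimates gives the claimed $\asymp T$.

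The step I expect to be the main obstacle is verifying that the constant coefficient $B_{{\mathbf{S}}^n}$ is not identically zero for every $n \ge 2$, since it is exactly this non-monomiality that drives Main Theorem 1. This cannot be inferred from the three sample formulas alone but must be extracted from the eigenvalue-multiplicity data (\ref{eq:muleigen}) and (\ref{eq:eigen2}): expanding the multiplicity as a polynomial in $(k + (n-1)/2)$ writes $Z_{{\mathbf{S}}^n}(s)$ as a finite combination of Hurwitz zeta-functions $\zeta(2s - j, (n+1)/2)$, whose top term $j = n-1$ furnishes $A_{{\mathbf{S}}^n}(s)\zeta(2s - n + 1)$ while the remaining terms (together with the Hurwitz-to-Riemann correction of the top term) make up $B_{{\mathbf{S}}^n}$; one must check that these do not all cancel. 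A secondary point requiring care is confirming that the abscissa of absolute convergence of $B_{{\mathbf{S}}^n}$ is indeed $n/2 - 1/4$, so that the substituted constant term lands exactly in $\mathcal{D}_w$ and $D$ is the correct target strip.
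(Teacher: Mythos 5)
Your proposal is correct and takes essentially the same route as the paper, which obtains the theorem directly from the decomposition (\ref{eq:zns}) together with Main Theorems 1 and 2 (the affine substitution $w=2s-n+1$ and the verification of the hypotheses for $\zeta$ being left implicit there). The one point you flag, that $B_{{\mathbf{S}}^n}$ is not identically zero for $n\ge 2$, is settled exactly by the mechanism you describe: since the sum defining $Z_{{\mathbf{S}}^n}(s)$ starts at $k=1$, the product $A_{{\mathbf{S}}^n}(s)\zeta(2s-n+1)$ contains low-frequency terms (e.g.\ the $m=1$ term, yielding the corrections $-4^s$ for $n=2$ and $-1$ for $n=3$) that are absent from $Z_{{\mathbf{S}}^n}(s)$, so the difference $B_{{\mathbf{S}}^n}$ cannot vanish identically.
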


\subsection{Zeros of the Euler-Zagier multiple zeta-functions}
The Euler-Zagier multiple zeta-functions $\zeta_r (s_1, \ldots ,s_r)$ are defined by 
\begin{equation}\label{eq:defez}
\zeta_r (s_1, \ldots ,s_r) := \sum_{n_1> \cdots > n_r >0}^{\infty} \frac{1}{n_1^{s_1} \cdots n_r^{s_r}}, \qquad
\Re (s_1) >1, \quad \Re(s_j) \ge 1, \quad 2 \le j \le r.
\end{equation}
When $r=2$, Atkinson \cite{Atkinson} obtained an analytic continuation for $\zeta_2(s_1,s_2)$ in order to study the mean square $\int_0^T |\zeta (1/2+it)|^2dt$. He obtained an explicit formula for $\int_0^T |\zeta (1/2+it)|^2dt$ by using the harmonic product formula
$$
\zeta (s_1) \zeta (s_2) = \zeta_2(s_1,s_2) + \zeta_2(s_2,s_1) +\zeta (s_1+s_2) .
$$
More than fifty years later, Zhao \cite[Theorem 5]{Zhao}, and Akiyama, Egami and Tanigawa \cite[Theorem 1]{Aki}, independently, published the following analytic continuation for the Euler-Zagier multiple zeta-functions (see also a survey of Matsumoto \cite{Masmz}). 
\begin{thm}[{\cite[Theorem 5]{Zhao}}]
The Euler-Zagier multiple zeta-function $\zeta_r (s_1, \ldots ,s_r)$ can be analytically continued to a meromorphic function on all of ${\mathbb{C}}^r$ with possible poles at $s_1=1$ and $s_1 + \cdots + s_k = k-n$ for $1 \le k \le r$ and non-negative integer $n$. Moreover all the poles are simple.
\end{thm}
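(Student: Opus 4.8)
The plan is to establish the meromorphic continuation and locate the poles by induction on the depth $r$, peeling off the outermost variable $n_1$ and treating it through the Euler--Maclaurin summation formula. The base case $r=1$ is the classical continuation of $\zeta(s)$, whose only pole is the simple one at $s=1$. For the inductive step I would fix $n_2 > \cdots > n_r \ge 1$ and perform the sum over $n_1 > n_2$ first, observing that
\[
\sum_{n_1 > n_2} n_1^{-s_1} = \zeta(s_1, n_2 + 1), \qquad \zeta(s,a) := \sum_{m=0}^\infty (m+a)^{-s},
\]
so that $\zeta_r(s_1, \ldots, s_r) = \sum_{n_2 > \cdots > n_r \ge 1} n_2^{-s_2}\cdots n_r^{-s_r}\,\zeta(s_1, n_2+1)$ for $\Re(s_1)$ large.

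Next I would insert the Euler--Maclaurin expansion of the Hurwitz zeta-function to order $N$,
\[
\zeta(s_1, n_2+1) = \frac{(n_2+1)^{1-s_1}}{s_1 - 1} + \frac{(n_2+1)^{-s_1}}{2} + \sum_{k=1}^{N} \frac{B_{2k}}{(2k)!}\,s_1(s_1+1)\cdots(s_1+2k-2)\,(n_2+1)^{-s_1-2k+1} + R_N,
\]
where $R_N = R_N(s_1, n_2)$ is holomorphic in $s_1$ for $\Re(s_1) > -2N$ and decays like $(n_2+1)^{-\Re(s_1)-2N-1}$ in $n_2$. Expanding each power through $(n_2+1)^{-w} = n_2^{-w}\sum_{i \ge 0}\binom{-w}{i} n_2^{-i}$ and resumming over the inner lattice $n_2 > \cdots > n_r$ turns every explicit term into a binomial-weighted series of lower-depth functions $\zeta_{r-1}$ whose first argument is shifted to $s_1 + s_2 - 1 + i$, $s_1 + s_2 + i$, or $s_1 + s_2 + (2k-1) + i$. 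The contribution $\sum_{n_2>\cdots} n_2^{-s_2}\cdots R_N$ converges and is holomorphic for $\Re(s_1) > -2N + C$, so letting $N \to \infty$ extends $\zeta_r$ to all of $\mathbb{C}^r$, the various $N$ agreeing by analytic continuation.

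Reading off the poles is then transparent. The prefactor $1/(s_1 - 1)$ supplies the simple hyperplane $s_1 = 1$ (the case $k=1$, $n=0$). By the induction hypothesis each shifted $\zeta_{r-1}(s_1 + s_2 + c + i, s_3, \ldots, s_r)$ is meromorphic with simple poles only where its first argument equals $1$ or where a consecutive sum of its arguments equals $j - n'$; since that first argument already carries $s_1 + s_2$, every such singular hyperplane translates back to $s_1 + \cdots + s_k = k - n$ with $1 \le k \le r$ and $n \ge 0$. Crucially, because the leading variable of every $\zeta_{r-1}$ begins with $s_1 + s_2$, no ``spurious'' hyperplane (one whose consecutive sum does not start at $s_1$) ever appears, so no cancellation between terms is needed; this is the advantage of peeling the outer variable through the Hurwitz function rather than running a Mellin--Barnes recursion, where one would instead have to prove that apparent poles coming from the Gamma-factors cancel those of the lower-depth zeta.

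I expect the main obstacle to lie in the analytic bookkeeping rather than in the pole location: one must establish uniform estimates on the Euler--Maclaurin remainder $R_N$ and on the inner and binomial sums (which are only conditionally convergent at the endpoint, e.g.\ $\sum_i \binom{1-s_1}{i} = 2^{1-s_1}$) in order to justify interchanging the summations and continuing term by term, and one must check that distinct simple poles never coalesce into a pole of order $\ge 2$ along a single hyperplane, so that simplicity propagates through the induction. The growth of $\zeta$ and of $\zeta_{r-1}$ at large negative real argument, governed by the Bernoulli numbers and the functional equation, is what makes these bounds delicate, and carrying the requisite estimates along as part of the inductive hypothesis is the technical heart of the argument.
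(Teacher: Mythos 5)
This statement is quoted in the paper from Zhao and from Akiyama--Egami--Tanigawa; the paper gives no proof of it, so there is nothing internal to compare against. Your sketch is, in substance, the Euler--Maclaurin proof of Akiyama, Egami and Tanigawa (Zhao's own cited proof is quite different, proceeding via Mellin transforms of products of generalized functions), and the outline is sound: the induction on $r$, the reduction of the outer sum to $\zeta(s_1,n_2+1)$, the expanding half-spaces $\Re(s_1+s_2)>-2N+C$ glued by analytic continuation, and the bookkeeping showing that every singular hyperplane of a shifted $\zeta_{r-1}$ pulls back to one of the form $s_1+\cdots+s_k=k-n$ are all correct. Two remarks. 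First, the binomial re-expansion $(n_2+1)^{-w}=n_2^{-w}\sum_{i\ge0}\binom{-w}{i}n_2^{-i}$ is a self-inflicted complication: if you instead apply Euler--Maclaurin directly to $\sum_{n_1>n_2}n_1^{-s_1}$ (equivalently, expand $\zeta(s_1,n_2+1)=\zeta(s_1,n_2)-n_2^{-s_1}$ in powers of $n_2$ rather than of $n_2+1$), each explicit term resums to a \emph{single} $\zeta_{r-1}(s_1+s_2-1+j,s_3,\ldots,s_r)$ with no infinite $i$-sum, which eliminates precisely the conditional-convergence issue at the endpoint that you flag (e.g.\ $\sum_i\binom{1-s_1}{i}$ diverges for $\Re(s_1)\ge 2$, so your identity would first have to be established in a restricted region and then continued). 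Second, your worry about simple poles coalescing is easily dispatched: distinct terms can only add simple poles along a common hyperplane (still simple), and a genuine order-two pole would require a product such as $(s_1-1)^{-1}\zeta_{r-1}(s_1+s_2-1+i,\ldots)$ to be singular along the \emph{same} hyperplane, whereas $s_1=1$ and $s_1+s_2=2-i$ meet only in codimension two. With the uniform remainder estimate $R_N(s_1,n_2)\ll_N |s_1|^{2N+1}(n_2+1)^{-\Re(s_1)-2N-1}$ and the crude bound $n_j^{-\Re(s_j)}\le n_2^{|\Re(s_j)|}$ for $j\ge3$ to control the resummed remainder, the argument closes; these are exactly the estimates carried out in the cited Acta Arithmetica paper.
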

The values of the Euler-Zagier zeta-functions (after the continuation) at negative integer points are discussed by Akiyama and Tanigawa \cite{Aki2}. Moreover, Matsumoto showed a functional equation for the Euler-Zagier double zeta-functions in \cite{Ma2}. 

In the 1990s it was recognized that the values of the Euler-Zagier multiple zeta-function at integer points (multiple zeta-values) are quite important in the theory of quantum groups, knot theory, and so on. A lot of research on multiple zeta-values has then been done. For example, explicit expressions such as
$$
\zeta_r (2, \ldots ,2) = \frac{\pi^{2r}}{(2r+1)!}, \qquad \quad
\zeta_{2r} (3,1, \ldots, 3,1) = \frac{2\pi^{4r}}{(4r+2)!}
$$
have been shown. In recent years, many relations among multiple zeta-values were discovered by a lot of mathematicians, for instance, Ihara, Hoffman, Kaneko, Kawashima, Ohno and Zagier (see Hoffman's web page). 

Using the proof of Hoffman's relation \cite[Theorem 2.1]{Hoff}, one can show the following relation (see also \cite[Lemma 3.4]{NaPa2}). Let $\Pi$ denote a partition of the set $\{1,2,\ldots,r\}$. Moreover, for $\Pi=\{\varpi_1,\ldots,\varpi_l\}$, put
\[
c(\Pi) = (-1)^{r-l} \prod_{j=1}^l (|\varpi_j|-1)!\qquad\text{and}\qquad \zeta(s_1, \ldots ,s_r ; \Pi) = \prod_{j=1}^l\zeta \Biggl( \sum_{k\in \varpi_j}s_k \Biggr).
\]
Then, one can prove the following. For any $s_1,\ldots,s_r$ except for singularity,  we have
\begin{equation}\label{eq:ghof}
\sum_{\sigma\in\Sigma_r} \zeta_r (s_{\sigma(1)}, \ldots , s_{\sigma(r)}) = \sum_{{\rm{partitions}} \, \Pi \, {\rm{of}} \, \{1,\ldots,r\}} c(\Pi)\zeta(s_1, \ldots , s_r;\Pi),
\end{equation}
where $\Sigma_r$ denotes the symmetric group of degree $r$. Hence, there exists a polynomial $Z_r^{\#}(s)$ whose coefficients are Dirichlet series absolutely convergent in the half-plane $\Re (s) >1/2$, such that $\zeta_r(s,s,\ldots,s) = Z_r^{\#}(\zeta(s))$. For example, we have
$$
\zeta_2(s,s) = \frac{1}{2} \zeta (s)^2 - \frac{1}{2} \zeta (2s), \qquad
\zeta_3(s,s,s) = \frac{1}{6} \zeta (s)^3 - \frac{1}{2} \zeta (s) \zeta (2s) + \frac{1}{3} \zeta (3s). 
$$
By the relation (\ref{eq:ghof}), the authors showed the following theorem. 
\begin{thm}[{\cite[Theorem 3.5]{NaPa2}}]\label{th:ezuni1}
Let $g(s)$ be a function such that only $\zeta (s)$ is replaced by $f(s) \in H(K)$ in $\zeta (s ,\ldots, s ; \Pi_r)$ and $Z_r^{\#} (f(s),s) := \sum_{\Pi_r} c(\Pi_r) g(s)/r!$. Then for every $\varepsilon>0$, it holds that
\[
\liminf_{T\to\infty} \nu_T \left\{\Vert \zeta_r(s+i\tau, \ldots ,s+i\tau) - Z_r^{\#}(s) \Vert_K \right\}>0.
\]
\end{thm}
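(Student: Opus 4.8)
The plan is to reduce the statement directly to Theorem~\ref{th:napa2poly} by exhibiting $\zeta_r(s,\ldots,s)$ as the value of a suitable polynomial $P_s\in\mathcal{D}_s[X]$ evaluated at the Riemann zeta-function. First I would specialize the relation (\ref{eq:ghof}) to the diagonal $s_1=\cdots=s_r=s$. Since every permutation fixes the diagonal, the left-hand side collapses to $r!\,\zeta_r(s,\ldots,s)$, while on the right-hand side each factor $\zeta\bigl(\sum_{k\in\varpi_j}s_k\bigr)$ becomes $\zeta(|\varpi_j|s)$, so that
\[
\zeta_r(s,\ldots,s)=\frac{1}{r!}\sum_{\Pi}c(\Pi)\prod_{j=1}^{l}\zeta(|\varpi_j|s).
\]
Reading this as a polynomial in the single variable $\zeta(s)$, the factors with $|\varpi_j|=1$ supply the powers of $\zeta(s)$, while the remaining factors $\zeta(2s),\zeta(3s),\ldots$ form the coefficients. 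Each $\zeta(ms)$ with $m\ge 2$ is a Dirichlet series absolutely convergent for $\Re(s)>1/2$, hence lies in $\mathcal{D}_s$; therefore $P_s\in\mathcal{D}_s[X]$ has degree $r$ and leading coefficient $1/r!$, and $\zeta_r(s,\ldots,s)=P_s(\zeta(s))=Z_r^{\#}(\zeta(s))$.

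Next I would record the two facts needed to invoke Theorem~\ref{th:napa2poly}. The Riemann zeta-function possesses an Euler product, hence is hybridly universal by \cite{PankowskiHybrid}. Moreover the poles of the factors $\zeta(ms)$ occur only at $s=1/m\le 1/2$ for $m\ge 2$ and at $s=1$ for $m=1$, so $P_s(\zeta(s))$ is holomorphic on the open strip $D$. Consequently the identity $\zeta_r(w,\ldots,w)=P_w(\zeta(w))$, which is valid by analytic continuation from the region of absolute convergence, holds throughout $D$, and $D$ is invariant under the vertical shifts $w=s+i\tau$ with $s\in K\subset D$.

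With these in hand, applying Theorem~\ref{th:napa2poly} to $L(s)=\zeta(s)$ and the polynomial $P_s$ gives, for any $f\in H(K)$ and any $\varepsilon>0$,
\[
\liminf_{T\to\infty}\nu_T\bigl\{\,\Vert P_{s+i\tau}(\zeta(s+i\tau))-P_s(f(s))\Vert_K<\varepsilon\,\bigr\}>0.
\]
It then remains only to translate the two quantities back. By the identity just established, $P_{s+i\tau}(\zeta(s+i\tau))=\zeta_r(s+i\tau,\ldots,s+i\tau)$, since the coefficients and the argument are simultaneously shifted by $i\tau$. On the other hand $P_s(f(s))$ is obtained from $P_s(\zeta(s))$ by replacing the argument $\zeta(s)$ with $f(s)$ while retaining the coefficients $\zeta(ms)$, $m\ge 2$; unwinding the definitions, this is exactly $\frac{1}{r!}\sum_{\Pi}c(\Pi)g(s)=Z_r^{\#}(f(s),s)$, where $g$ denotes the $\Pi$-term $\zeta(s,\ldots,s;\Pi)$ with $\zeta(s)$ replaced by $f(s)$. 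This yields the asserted inequality.

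The genuinely delicate point, as opposed to the formal reduction, is the justification that $P_s(\zeta(s))$ is holomorphic on all of $D$: a priori $\zeta_r(s,\ldots,s)$ could inherit poles from the polar hyperplanes $s_1+\cdots+s_k=k-n$ of the multiple zeta-function, which meet the diagonal at the real points $s=1-n/k$, some of which lie in $(1/2,1)$. I expect these apparent singularities to cancel identically, as they visibly do in the displayed examples $\zeta_2(s,s)$ and $\zeta_3(s,s,s)$, precisely because the right-hand side of (\ref{eq:ghof}) has poles only at $s=1/m$. Verifying this cancellation, equivalently that $\sum_{\Pi}c(\Pi)\prod_{j}\zeta(|\varpi_j|s)$ has no pole in $D$, is the main structural check underlying the argument.
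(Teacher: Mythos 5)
Your proposal is correct and follows essentially the same route the paper indicates: specialize the Hoffman-type relation (\ref{eq:ghof}) to the diagonal to write $\zeta_r(s,\ldots,s)=Z_r^{\#}(\zeta(s))$ as a polynomial in $\zeta(s)$ with coefficients in $\mathcal{D}_s$, then apply Theorem~\ref{th:napa2poly} with $L=\zeta$. Your extra check that the potential diagonal poles $s=1-n/k$ are removable because the right-hand side of (\ref{eq:ghof}) is holomorphic on $D$ is a correct and worthwhile observation that the paper leaves implicit.
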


It should be noted that the first author showed the universality for the Euler-Zagier-Hurwitz type of multiple zeta-functions in \cite[Theorem 2.1]{Nakamura4}. Moreover he obtained relations between the zero-free region and the (joint) universality for Euler-Zagier-Hurwitz type of multiple zeta-functions (\ref{eq:ezhdef}). Afterwards, the authors showed not only a simple proof of it but also the more general cases in \cite[Theorems 3.2 and 3.3]{NaPa2} by the hybrid universality.

Zhao \cite[Section 5]{Zhao} obtained trivial zeros of the Euler-Zagier multiple zeta-functions. By analogy with the Riemann zeta-function, he propounded the following problem: ``Determine the complete set of trivial (resp. nontrivial) zeros of the multiple zeta-functions'' in \cite[Problem 1]{Zhao}. For nontrivial zeros, we obtain the following theorems and corollaries by the equation (\ref{eq:ghof}) and Main Theorems. 
\begin{theorem}\label{th:t1}
For any $1/2 < \sigma_1 < \sigma_2 < 1$, the Euler-Zagier multiple zeta-function $\zeta_r (s, \ldots ,s)$, $r \ge 2$ has $\asymp T$ nontrivial zeros in the rectangle $\sigma_1 < \sigma < \sigma_2$, $0 < t <T$. 
\end{theorem}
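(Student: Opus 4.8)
The plan is to express $\zeta_r(s,\ldots,s)$ as a non-monomial polynomial in $\zeta(s)$ with Dirichlet-series coefficients and then apply Main Theorems 1 and 2 with $L(s)=\zeta(s)$. First I would specialize the relation (\ref{eq:ghof}) to the diagonal $s_1=\cdots=s_r=s$. The left-hand side becomes $r!\,\zeta_r(s,\ldots,s)$, while on the right each partition $\Pi=\{\varpi_1,\ldots,\varpi_l\}$ contributes $c(\Pi)\prod_{j=1}^l\zeta(|\varpi_j|s)$. Reading $X=\zeta(s)$ as the variable and $\zeta(2s),\ldots,\zeta(rs)$ (absolutely convergent for $\Re(s)>1/2$) as coefficients, this gives $\zeta_r(s,\ldots,s)=Z_r^{\#}(\zeta(s))$ with $Z_r^{\#}\in\mathcal{D}_s[X]$ of degree $r$, the degree of each monomial being the number of singleton blocks of the corresponding partition.

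The step I expect to demand the most care is verifying that $Z_r^{\#}$ is genuinely not a monomial. The finest partition into $r$ singletons has $c(\Pi)=1$ and produces the leading term $\tfrac{1}{r!}X^r$, so any monomial equal to $Z_r^{\#}$ would have to be exactly $\tfrac{1}{r!}X^r$, i.e.\ $\zeta_r(s,\ldots,s)=\tfrac{1}{r!}\zeta(s)^r$. But these two Dirichlet series differ: expanding $\tfrac{1}{r!}\zeta(s)^r$ produces diagonal contributions (for instance a constant term $\tfrac{1}{r!}$ from $n_1=\cdots=n_r=1$), whereas $\zeta_r(s,\ldots,s)$ sums only over strictly decreasing tuples and has no constant term. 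Hence $Z_r^{\#}$ is not a monomial; equivalently, the coarsest partition already contributes a nonzero term $\tfrac{(-1)^{r-1}}{r}\zeta(rs)$ of degree $0$, in agreement with the displayed cases $r=2,3$.

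With non-monomiality in hand, the lower bound follows at once from Main Theorem 1 applied to the hybridly universal function $\zeta(s)$ and the polynomial $P_s=Z_r^{\#}$: there is a constant $C>0$ such that $\zeta_r(s,\ldots,s)=Z_r^{\#}(\zeta(s))$ has more than $CT$ zeros in the rectangle $\sigma_1<\sigma<\sigma_2$, $0<t<T$, that is, $\gg T$ zeros.

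For the matching upper bound I would invoke Main Theorem 2, again with $L(s)=\zeta(s)$. The three hypotheses are classical: $\zeta(s)$ continues meromorphically to $\Re(s)>1/2$ with only the simple pole at $s=1$ on the line $\Re(s)=1$, it is of finite order, and $\int_0^T|\zeta(\sigma+it)|^2\,dt\ll T$ for each fixed $1/2<\sigma<1$. Main Theorem 2 then yields $N_{Z_r^{\#}(\zeta)}(\sigma_1,T)\ll T$; since every zero counted in the rectangle satisfies $\Re(\rho)>\sigma_1$, the number of zeros in $\sigma_1<\sigma<\sigma_2$, $0<t<T$ is $\ll T$. Combining the two estimates gives $\asymp T$ nontrivial zeros, as claimed.
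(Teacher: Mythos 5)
Your proposal is correct and follows exactly the route the paper intends: specialize Hoffman's relation (\ref{eq:ghof}) to the diagonal to write $\zeta_r(s,\ldots,s)=Z_r^{\#}(\zeta(s))$ with $Z_r^{\#}\in\mathcal{D}_s[X]$, check non-monomiality via the nonzero constant term $\tfrac{(-1)^{r-1}}{r}\zeta(rs)$ coming from the coarsest partition, and apply Main Theorem 1 for the lower bound and Main Theorem 2 (with the classical properties of $\zeta$) for the upper bound. In fact your write-up supplies more detail (the non-monomiality verification and the hypothesis check for Main Theorem 2) than the paper, which only cites (\ref{eq:ghof}) and the Main Theorems.
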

\begin{corollary}
The Euler-Zagier multiple zeta-function $\zeta_r (s_1, \ldots ,s_r)$ has zeros in $1/ 2 < \Re (s_1), \ldots , \Re (s_r) < 1$. 
\end{corollary}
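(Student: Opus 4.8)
The plan is to deduce the corollary immediately from Theorem~\ref{th:t1} by restricting the $r$-variable function to its main diagonal. First I would observe that substituting $s_1 = s_2 = \cdots = s_r = s$ into $\zeta_r(s_1, \ldots, s_r)$ produces exactly the one-variable function $\zeta_r(s, \ldots, s)$ treated in Theorem~\ref{th:t1}, which by the relation (\ref{eq:ghof}) coincides with the polynomial expression $Z_r^{\#}(\zeta(s))$. This restriction is holomorphic throughout the strip $D = \{1/2 < \Re(s) < 1\}$: the only possible poles of the constituent factors $\zeta\bigl(|\varpi_j|\, s\bigr)$ occur at $s = 1/|\varpi_j|$, and since $|\varpi_j| \ge 1$ these points all lie outside the open strip $D$ (for $|\varpi_j|=1$ at $s=1$, for $|\varpi_j|=2$ at $s=1/2$, and strictly to the left of $D$ otherwise). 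Hence the zeros counted in Theorem~\ref{th:t1} are genuine zeros of a holomorphic function, not cancellations against poles.

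Next I would select any zero $s_0$ of $\zeta_r(s, \ldots, s)$ with $1/2 < \Re(s_0) < 1$; Theorem~\ref{th:t1} guarantees the existence of such zeros, indeed $\asymp T$ of them in any sub-rectangle $\sigma_1 < \sigma < \sigma_2$, $0 < t < T$ with $1/2<\sigma_1<\sigma_2<1$. The diagonal point $(s_0, s_0, \ldots, s_0) \in \mathbb{C}^r$ then satisfies $\zeta_r(s_0, \ldots, s_0) = 0$ while each of its coordinates has real part $\Re(s_0) \in (1/2, 1)$. This exhibits a zero of the multivariate function $\zeta_r$ lying in the region $1/2 < \Re(s_1), \ldots, \Re(s_r) < 1$, which is exactly the assertion of the corollary.

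Since the argument is a pure specialization, there is no substantive obstacle to overcome: the entire analytic content has already been supplied by Theorem~\ref{th:t1}, and the only point worth verifying is the one recorded above, namely that the diagonal restriction is genuinely holomorphic on $D$ so that each of its zeros $s_0$ gives an honest zero $(s_0,\ldots,s_0)$ of $\zeta_r$ inside the prescribed polycylindrical region.
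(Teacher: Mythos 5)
Your argument is correct and is exactly the route the paper intends: the corollary is stated as an immediate consequence of Theorem~\ref{th:t1}, obtained by taking any zero $s_0$ of the diagonal restriction $\zeta_r(s,\ldots,s)=Z_r^{\#}(\zeta(s))$ in the strip and observing that $(s_0,\ldots,s_0)$ is then a zero of the multivariate function with every coordinate in $(1/2,1)$. Your extra check that the diagonal restriction is holomorphic on $D$ (since the factors $\zeta(|\varpi_j|s)$ have their poles at $s=1/|\varpi_j|\le 1$, outside the open strip) is a reasonable precaution but adds nothing beyond what the paper already uses.
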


\subsection{Lindel\"of hypothesis}
The Lindel\"of hypothesis for the Riemann zeta-function says that $\zeta (1/2 + it) = O(|t|^{\varepsilon})$. It is widely known that the Riemann hypothesis for $\zeta(s)$ implies the Lindel\"of hypothesis. Obviously, the Lindel\"of hypothesis for the function
$$
\zeta_{N} (s) := \zeta (s) - \sum_{n=1}^N n^{-s}
$$
is equivalent to the Lindel\"of hypothesis for $\zeta(s)$. Thus we can call $\Re (s) =1/2$ the critical line for $\zeta_N (s)$ since the estimation $\zeta_N (1/2 + it) = O(|t|^{\varepsilon})$ is equivalent to $\zeta (1/2 + it) = O(|t|^{\varepsilon})$. However, $\zeta_{N} (s)$ has infinitely many zeros off the critical line for $\zeta_N (s)$ by Main Theorems. More precisely, there exist a constant $C>0$ such that for sufficiently large $T$ the function $\zeta_N (s)$ has more than $CT$ nontrivial zeros in the rectangle $0 < t <T$, $\sigma_1 < \sigma < \sigma_2$  for any $1/2 < \sigma_1 < \sigma_2 < 1$. 

Next consider the Lindel\"of hypothesis for $\zeta_{\eta, \theta}(s,L)$ which states
$$
\zeta_{\eta, \theta}(n/2+it,L) = O(|t|^{\varepsilon}).
$$
By using (\ref{eq:ibusa1}), the estimation $\zeta (1+2it) = O(\log t)$ (see \cite[Theorem 5.16]{Tit}), and the fact that $b_n(s; L)$ and $A_n(s; L)$ are absolutely convergent general Dirichlet series for any odd $n \ge 3$, we have the following proposition. 
\begin{proposition}\label{pro:ibpzli}
The Lindel\"of hypothesis for $\zeta (s)$ is true if and only if the Lindel\"of hypothesis for $\zeta_{\eta, \theta}(s,L)$ is true for each $n \in 2{\mathbb{N}}+1$. 
\end{proposition}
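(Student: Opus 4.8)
The plan is to work directly on the critical line $\Re(s)=n/2$ by substituting $s=n/2+it$ into the decomposition (\ref{eq:ibusa1}) and tracking the size of each of the three building blocks $b_n(s;L)$, $A_n(s;L)$, $B_n(s)$ together with the shifted zeta-factor as $|t|\to\infty$. The crucial observation is the shift $s-(n-1)/2=1/2+it$, so that the main term $A_n(s;L)\zeta(s-(n-1)/2)$ becomes $A_n(n/2+it;L)\,\zeta(1/2+it)$. This is precisely why $\Re(s)=n/2$ is the critical line for $\zeta_{\eta,\theta}(s,L)$ and why its Lindel\"of hypothesis should mirror that of $\zeta(s)$ on $\Re(s)=1/2$.

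Next I would control the auxiliary factors. First, $b_n(n/2+it;L)$ is, up to the unimodular factor $2^{(n-1)it}$ present in the case $L=L_n^*$, a nonzero constant, hence bounded above and away from zero uniformly in $t$. Second, at $s=n/2+it$ every factor of $A_n(s;L)$ has the form $\zeta\bigl((n-2k+1)+2it\bigr)$ with $1\le k\le (n-1)/2$, so with real part $n-2k+1\ge 2$ lying strictly inside the region of absolute convergence; each factor is therefore bounded above by $\zeta(2)$ and below by $2-\zeta(2)>0$, giving $A_n(n/2+it;L)=O(1)$ and $|A_n(n/2+it;L)|\gg 1$. The only delicate block is $B_n$: at $s=n/2+it$ its constituent $\prod_{k=1}^{(n-1)/2}\zeta(2s-2k)$ has arguments $(n-2k)+2it$, and exactly one of them, for $k=(n-1)/2$, lands on the line $\Re=1$, producing $\zeta(1+2it)$, while every remaining factor and the factor $\zeta(n/2+it)$ have real part $\ge 2$ and are $O(1)$. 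Invoking $\zeta(1+2it)=O(\log t)$ from \cite[Theorem 5.16]{Tit} then yields $B_n(n/2+it)=O(\log t)$ as $|t|\to\infty$, the pole of this factor at $t=0$ being irrelevant for a Lindel\"of-type estimate.

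With these estimates in hand both implications follow from the identity
\[
\zeta_{\eta,\theta}(n/2+it,L)=b_n(n/2+it;L)\bigl(A_n(n/2+it;L)\,\zeta(1/2+it)+B_n(n/2+it)\bigr).
\]
For the forward direction, assuming $\zeta(1/2+it)=O(|t|^\varepsilon)$, the right-hand side is $O(1)\cdot\bigl(O(1)\cdot O(|t|^\varepsilon)+O(\log t)\bigr)=O(|t|^\varepsilon)$, which is the Lindel\"of hypothesis for $\zeta_{\eta,\theta}(s,L)$. For the converse I would solve for $\zeta(1/2+it)$,
\[
\zeta(1/2+it)=\frac{b_n(n/2+it;L)^{-1}\zeta_{\eta,\theta}(n/2+it,L)-B_n(n/2+it)}{A_n(n/2+it;L)},
\]
and bound the right-hand side using $|b_n(n/2+it;L)|\gg1$, $|A_n(n/2+it;L)|\gg1$ and $B_n(n/2+it)=O(\log t)$, so that $\zeta(1/2+it)=O(|t|^\varepsilon)+O(\log t)=O(|t|^\varepsilon)$.

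The main obstacle is this converse step: dividing by $A_n(n/2+it;L)$ requires the uniform lower bound $|A_n(n/2+it;L)|\gg 1$, so the genuine content is the verification that no factor of $A_n$ approaches zero along $\Re(s)=n/2$. This is guaranteed precisely because all of its arguments remain in the region of absolute convergence $\Re\ge 2$; the whole argument hinges on the numerology $n-2k+1\ge 2$ for the factors of $A_n$ against the single borderline factor $n-2k=1$ for $B_n$, which is exactly what separates the factors that stay bounded away from zero from the one factor responsible for only logarithmic growth.
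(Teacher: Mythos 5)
Your proposal is correct and follows essentially the same route as the paper: bound $b_n$ and $A_n$ above and away from zero on $\Re(s)=n/2$ (the paper uses the lower bound $\zeta(2\sigma)/\zeta(\sigma)\le|\zeta(\sigma+it)|$ where you use $2-\zeta(\sigma)$, an immaterial difference), show $B_n(n/2+it)=O(\log t)$ via $\zeta(1+2it)=O(\log t)$, and read off both implications from (\ref{eq:ibusa1}). The only slip is the claim that the factor $\zeta(n/2+it)$ of $B_n$ has real part $\ge 2$ (for $n=3$ it is $3/2$), but since $3/2>1$ it is still bounded and nothing changes.
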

\begin{proof}
It is well-know that
$$
0< \frac{\zeta (2\sigma)}{\zeta (\sigma)} = \prod_p \frac{1}{1+p^{-\sigma}} \le |\zeta(s)| \le \zeta (\sigma),
\qquad \sigma >1.
$$
Hence $b_n(s;L)$ and $A_n(s; L)$ are non-vanishing and bounded when $\Re (s)=n/2$. Moreover, we have $B_n(s; L)= O(\log t)$ from its definition, $\zeta (1+2it) = O(\log t)$ and the estimation above. Therefore we obtain this proposition by (\ref{eq:ibusa1}). 
\end{proof}

Similarly, we can define the Lindel\"of hypothesis for $Z_{{\mathbf{S}}^n}(s)$ which states
$$
Z_{{\mathbf{S}}^n} (n/2-1/4+it) = O(|t|^{\varepsilon}).
$$
We obtain the following proposition by (\ref{eq:zns}). 
\begin{proposition}\label{pro:spzli}
The Lindel\"of hypothesis for $\zeta (s)$ is true if and only if the Lindel\"of hypothesis for $Z_{{\mathbf{S}}^n}(s)$ is true for each $n \ge 2$. 
\end{proposition}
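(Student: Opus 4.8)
The plan is to mimic the proof of Proposition~\ref{pro:ibpzli}, the decisive point being that the critical line $\Re(s)=n/2-1/4$ of $Z_{{\mathbf{S}}^n}$ is exactly the line on which the inner Riemann zeta-factor in (\ref{eq:zns}) is evaluated on its own critical line. Indeed, writing $s=n/2-1/4+it$ one has $2s-n+1=1/2+2it$, so (\ref{eq:zns}) reads
\[
Z_{{\mathbf{S}}^n}(n/2-1/4+it)=A_{{\mathbf{S}}^n}(n/2-1/4+it)\,\zeta(1/2+2it)+B_{{\mathbf{S}}^n}(n/2-1/4+it).
\]
Since the Lindel\"of bound $\zeta(1/2+2it)=O(|t|^\varepsilon)$ is equivalent, after the harmless rescaling $u=2t$, to the Lindel\"of hypothesis for $\zeta$, everything reduces to controlling the two coefficient functions $A_{{\mathbf{S}}^n}$ and $B_{{\mathbf{S}}^n}$ on this single vertical line.

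First I would record the two bounds that make the forward implication immediate. As $A_{{\mathbf{S}}^n}(s)$ is a Dirichlet polynomial, it is a finite exponential sum and hence $O(1)$ on $\Re(s)=n/2-1/4$. For $B_{{\mathbf{S}}^n}(s)$, its terms are shifted zeta-values $\zeta(2s-j)$ with $j\le n-2$ carried by Dirichlet polynomials, together with pure Dirichlet polynomials; on the line one has $\Re(2s-j)=n-1/2-j\ge 3/2>1$, so each such factor is bounded by the Euler-product estimate $|\zeta(\sigma+i\tau)|\le\zeta(\sigma)$ for $\sigma>1$, giving $B_{{\mathbf{S}}^n}(n/2-1/4+it)=O(1)$ (equivalently, $B_{{\mathbf{S}}^n}$ is absolutely convergent on the line). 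Consequently, if the Lindel\"of hypothesis for $\zeta$ holds then the displayed identity yields $Z_{{\mathbf{S}}^n}(n/2-1/4+it)=O(|t|^\varepsilon)$.

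For the converse I would solve the identity for the zeta-factor,
\[
\zeta(1/2+2it)=\frac{Z_{{\mathbf{S}}^n}(n/2-1/4+it)-B_{{\mathbf{S}}^n}(n/2-1/4+it)}{A_{{\mathbf{S}}^n}(n/2-1/4+it)},
\]
and read off $\zeta(1/2+2it)=O(|t|^\varepsilon)$ from the Lindel\"of bound for $Z_{{\mathbf{S}}^n}$ and the $O(1)$ bound for $B_{{\mathbf{S}}^n}$, provided $|A_{{\mathbf{S}}^n}|$ is bounded away from zero on the line. This lower bound is the crux, since $A_{{\mathbf{S}}^n}$ is a genuine Dirichlet polynomial whose terms could a priori cancel. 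The hard part is therefore to pin down the explicit shape of $A_{{\mathbf{S}}^n}$, which comes from the leading coefficient of the multiplicity (\ref{eq:muleigen}) and the conversion of the half-integer shift $k+(n-1)/2$ into integer (respectively odd-integer) summation ranges. For odd $n$ this produces a positive constant $A_{{\mathbf{S}}^n}(s)=2/(n-1)!$, while for even $n$ it produces a two-term polynomial of the form $c_n\bigl(2^{2s}-2^{n-1}\bigr)$, in agreement with the tabulated cases $A_{{\mathbf{S}}^2}(s)=2^{2s}-2$ and $A_{{\mathbf{S}}^4}(s)=\tfrac13(2^{2s-3}-1)$. In either case, on $\Re(s)=n/2-1/4$ one has $|2^{2s}|=2^{n-1}\sqrt2>2^{n-1}$, so the leading term dominates and $|A_{{\mathbf{S}}^n}(n/2-1/4+it)|\ge|c_n|\,2^{n-1}(\sqrt2-1)>0$ uniformly in $t$. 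Securing this lower bound completes the converse, and combining both implications for each fixed $n\ge2$ finishes the proof.
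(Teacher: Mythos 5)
Your proposal is correct and follows essentially the same route as the paper: both rest on the decomposition (\ref{eq:zns}), the explicit form of $A_{{\mathbf{S}}^n}(s)$ (a nonzero constant for odd $n$, $c_n(2^{2s-n+1}-1)$ for even $n$) and its non-vanishing on $\Re(s)=n/2-1/4$, together with the boundedness of $B_{{\mathbf{S}}^n}$ there. Your write-up merely makes explicit two points the paper leaves implicit, namely the uniform lower bound $|A_{{\mathbf{S}}^n}|\geq |c_n|2^{n-1}(\sqrt{2}-1)$ on the line and the $O(1)$ estimate for $B_{{\mathbf{S}}^n}$ coming from the shifts $\Re(2s-j)\geq 3/2$.
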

\begin{proof}
Suppose $n \ge 2$ and $\Re (s) > n/2$. Then we have
$$
Z_{{\mathbf{S}}^n} (s) = 
\sum_{k=1}^\infty \frac{(2k \!+\! n \!-\!1)(k \!+\! n \!-\! 2)!}{k! (n-1)!} 
\left( k+ \frac{n \!-\! 1}{2} \right)^{-2s} = 
A_{{\mathbf{S}}^n} (s)\zeta (2s \!-\! n \!+\! 1) + \cdots, 
$$
where $A_{{\mathbf{S}}^n} (s)$ is defined by 
$$
A_{{\mathbf{S}}^n} (s) := \frac{2}{(n-1)!} \begin{cases}
1 & n \mbox{ odd, }\\
2^{2s-n+1}-1 & n \mbox{ even,}
\end{cases}
$$
from (\ref{eq:muleigen}) and (\ref{eq:eigen2}). Obviously, $A_{{\mathbf{S}}^n} (s)$ does not vanish on the line $\Re (s) = n/2-1/4$. Therefore we obtain this proposition by (\ref{eq:zns}). 
\end{proof}

Finally, we can consider the Lindel\"of hypothesis for the Euler-Zagier multiple zeta-function $\zeta_r(s,\ldots ,s)$ which states that
$$
\zeta_r (1/2 + it, \ldots , 1/2 + it) = O(|t|^{\varepsilon}).
$$
Related to this problem, Ishikawa and Matsumoto \cite{IshiMa} proved an upper bound estimates for the Euler-Zagier multiple zeta-functions. Kiuchi and Tanigawa \cite{KiuTa} considered the problem of an order of magnitude for the triple zeta-functions of Euler-Zagier type in the region $0 < \Re (s_k) \le 1$, $1 \le k \le r$. It should be noted that Huxley \cite{Hux} showed $\zeta (1/2+it) = O(t^{H+\varepsilon})$, where $H:= 32/205=0.15609...$. Thus we have $$
\zeta_r (1/2 + it, \ldots , 1/2 + it) = O(|t|^{rH+\varepsilon})
$$ 
by the Hoffman formula (\ref{eq:ghof}). As an analogue of the zeta-functions associated to symmetric matrices, we have the following result. 
\begin{proposition}\label{pro:ezli}
The Lindel\"of hypothesis for $\zeta (s)$ is true if and only if the Lindel\"of hypothesis for $\zeta_r (s,\ldots ,s)$ is true for any $r \ge 2$. 
\end{proposition}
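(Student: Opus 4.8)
The plan is to exploit the explicit polynomial relation $\zeta_r(s,\ldots,s)=Z_r^{\#}(\zeta(s))$ furnished by the Hoffman formula (\ref{eq:ghof}), together with the classical bound $\zeta(1+2it)=O(\log t)$ already used in Proposition~\ref{pro:ibpzli}. Setting all arguments equal to $s$ in (\ref{eq:ghof}) and separating off the partition into singletons (whose coefficient is $1$) gives
\begin{equation*}
r!\,\zeta_r(s,\ldots,s) = \zeta(s)^r + \sum_{\Pi} c(\Pi)\,\zeta(s)^{a_1(\Pi)}\prod_{m\ge 2}\zeta(ms)^{a_m(\Pi)},
\end{equation*}
where the sum runs over all partitions $\Pi$ of $\{1,\ldots,r\}$ other than the all-singleton one, and $a_m(\Pi)$ denotes the number of blocks of size $m$. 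Since any such $\Pi$ has a block of size $\ge 2$, one has $a_1(\Pi)\le r-2$: the leading term is $\zeta(s)^r$ and every other term carries a strictly smaller power of $\zeta(s)$, compensated by factors $\zeta(ms)$ with $m\ge 2$.

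For the forward implication I would assume the Lindel\"of hypothesis for $\zeta(s)$ and evaluate on $\Re(s)=1/2$. There each factor $\zeta(ms)$ with $m\ge 2$ equals $\zeta(m/2+imt)$; for $m=2$ this is $\zeta(1+2it)=O(\log t)$ and for $m\ge 3$ it is bounded, while every power $\zeta(s)^{a_1}$ is $O(|t|^{r\varepsilon})$. Hence each summand, and the leading term, is $O(|t|^{r\varepsilon}(\log t)^{K})$ for a constant $K=K(r)$, so that $\zeta_r(1/2+it,\ldots,1/2+it)=O(|t|^{\varepsilon'})$; as $\varepsilon$ is arbitrary this is the Lindel\"of hypothesis for $\zeta_r(s,\ldots,s)$.

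The converse is the main point. I would isolate the leading term,
\begin{equation*}
\zeta(s)^r = r!\,\zeta_r(s,\ldots,s) - \sum_{\Pi} c(\Pi)\,\zeta(s)^{a_1(\Pi)}\prod_{m\ge 2}\zeta(ms)^{a_m(\Pi)},
\end{equation*}
restrict to $\Re(s)=1/2$, and put $V=|\zeta(1/2+it)|$. Bounding the $\zeta(ms)$-factors as above and using the assumed Lindel\"of hypothesis for $\zeta_r$ to estimate $r!\,\zeta_r=O(|t|^{\varepsilon})$, and noting $a_1(\Pi)\le r-2$, one arrives for large $t$ at an inequality of the shape
\begin{equation*}
V^{r}\le C|t|^{\varepsilon}+C'(\log t)^{K}V^{r-2}.
\end{equation*}
The obstacle here, in contrast to the linear relation in Proposition~\ref{pro:ibpzli}, is that the identity is polynomial rather than linear in $\zeta(s)$, so one cannot simply divide through; instead I would bootstrap. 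Factoring the inequality as $V^{r-2}\bigl(V^{2}-C'(\log t)^{K}\bigr)\le C|t|^{\varepsilon}$, I split into two cases: if $V^{2}\ge 2C'(\log t)^{K}$ then $V^{2}-C'(\log t)^{K}\ge V^{2}/2$, whence $V^{r}\le 2C|t|^{\varepsilon}$ and $V\ll |t|^{\varepsilon/r}$; otherwise $V<(2C')^{1/2}(\log t)^{K/2}\ll|t|^{\varepsilon}$. In either case $V=O(|t|^{\varepsilon'})$, and since $\varepsilon$ is arbitrary the Lindel\"of hypothesis for $\zeta(s)$ follows, completing the equivalence.
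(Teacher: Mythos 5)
Your proposal is correct and follows essentially the same route as the paper: both rest on the Hoffman-type identity $r!\,\zeta_r(s,\ldots,s)=\zeta(s)^r+\sum_{\Pi}c(\Pi)\prod_j\zeta(|\varpi_j|s)$ evaluated on $\Re(s)=1/2$, combined with $\zeta(1+2it)=O(\log t)$ and the boundedness of $\zeta(m/2+imt)$ for $m\ge 3$. The only difference is that the paper dispatches the converse with the single remark that the resulting equality ``implies the proposition,'' whereas you make that step explicit via the two-case bootstrap on $V=|\zeta(1/2+it)|$ --- a worthwhile elaboration, but not a different method.
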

\begin{proof}
Suppose the Lindel\"of hypothesis for Riemann zeta-function $\zeta (s)$ is true. By the harmonic product formula, we have
$$
\zeta^2 (1/2+it) = 2\zeta_2 (1/2+it, 1/2+it) + \zeta (1+2it), \qquad t \ne 0 .
$$
Because of $\zeta (1+2it) = O(\log t)$, the Lindel\"of hypothesis for $\zeta_2 (s,s)$ is true. Obviously, the opposite is also true. Using the Hoffman formula (\ref{eq:ghof}), we have
$$
\zeta_3 (1/2+it,1/2+it,1/2+it) = \zeta^3 (1/2+it)/6 - \zeta (1/2+it)\zeta(1+2it)/2 + \zeta (3/2+3it), 
\quad t \ne 0 .
$$
Thus we obtain the case $r=3$. Similarly, by the Hoffman formula (\ref{eq:ghof}), we have
\begin{equation*}
\begin{split}
&\zeta_r (1/2 + it, \ldots , 1/2 + it) = \zeta^r (1/2+it)/r! \\
&+ O\bigl( \zeta^{r-2} (1/2+it) \log t \bigr) + O\bigl( \zeta^{r-3} (1/2+it) \log t \bigr) + \cdots + O\bigl( \log ^r t \bigr) .
\end{split}
\end{equation*}
This equality implies Proposition \ref{pro:ezli}. 
\end{proof}

\begin{remark}
The Lindel\"of hypothesis for $\zeta_N (s)$, $\zeta_{\eta, \theta}(s,L)$, $n \in 2{\mathbb{N}}+1$, $Z_{{\mathbf{S}}^n}(s)$, $n \ge 2$ or $\zeta_r(s,\ldots ,s)$, $r \ge 2$ is equivalent to the Lindel\"of hypothesis for $\zeta (s)$. However, these functions have infinitely many zeros off the line $\Re (s) = 1/2$, $\Re (s) = n/2$, $\Re (s) = n/2-1/4$, or $\Re (s) = 1/2$, respectively. Hence we can say that the Lindel\"of hypothesis for $\zeta_{\eta, \theta}(s,L)$, $n \in 2{\mathbb{N}}+1$, $Z_{{\mathbf{S}}^n}(s)$, $n \ge 2$ and $\zeta_r(s,\ldots ,s)$ , $r \ge 2$ are independent of the Riemann hypothesis for these zeta-functions.
\end{remark}

\subsection{Zeros of the Barnes and Shintani multiple zeta-functions}
Barnes \cite{Ba} considered the multiple sum of the form
\begin{equation}\label{eq:Bardef}
\zeta_r (s, a \,|\, \Lambda) := \sum_{n_1 ,\ldots, n_r =0}^{\infty} (\lambda_1n_1+\cdots+\lambda_rn_r+a)^{-s},
\qquad \Re (s) >r \ge 2,
\end{equation}
where $a$, $\lambda_1, \ldots \lambda_r$ are complex numbers satisfy some conditions. Nowadays this is called the Barnes $r$-tuple zeta-function. Barnes proved that the function $\zeta_r (s \,; a \,|\, \Lambda)$ can be continued meromorphically to the whole $s$-plane and is holomorphic except for simple poles at $s= 1, \ldots, r$. Barnes defined the multiple gamma function by $\zeta_r (s, a \,|\, \Lambda)$ and studied its properties. Afterwards many mathematicians have studied properties of the Barnes multiple zeta-functions (see for example \cite[Section 1]{Masmz}). 

In this paper, we concern only the simple case $0<a $ and $\lambda_1 =\cdots = \lambda_r =1$. For simplicity, in this case we write $\zeta_r (s, a)$ instead of $\zeta_r (s, a \,|\, \Lambda)$. In \cite[p.~86]{Sr}, it was showed that 
\begin{equation}\label{eq:sri1}
\zeta_r (s,a) = \sum_{j=0}^{r-1} p_{rj} (a) \zeta (s-j,a) , \quad 
p_{rj} (a) := \frac{1}{(r-1)!} \sum_{l=j}^{r-1} (-1)^{r+1-j} \binom{l}{j} s(r,l+1) a^{l-j},  
\end{equation}
where $s(r,l+1)$ is the Stirling number of the first kind. For example, one has
\begin{equation*}
\begin{split}
&\zeta_2 (s,a) = (1-a) \zeta (s,a) + \zeta(s-1,a), \\
&\zeta_3 (s,a) = (a^2-3a+2) \zeta (s,a)/2 + (3-2a)\zeta(s-1,a)/2 + \zeta (s-2,a)/2 .
\end{split}
\end{equation*}
Recall that $\zeta (s,a)$ with $a=1,1/2$ is hybridly universal, and $\zeta (s,a)$ with rational $a \ne 1,1/2$ or transcendental $a$ is strongly hybridly universal (see Section 1). Therefore, on zeros for $\zeta_r (s, a)$ with $r \ge 2$, we have the following theorems by the equation (\ref{eq:sri1}), Main Theorems and the fact mentioned above.
\begin{theorem}\label{th:zb1}
Suppose that $a$ is not algebraic irrational. Then for any $r-1/2 < \sigma_1 < \sigma_2 < r$, the function $\zeta_r (s, a)$ with $r \ge 2$ has $\asymp T$ nontrivial zeros in the rectangle $\sigma_1 < \sigma < \sigma_2$, $0 < t <T$. 
\end{theorem}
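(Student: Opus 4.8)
The plan is to read off from (\ref{eq:sri1}) that, after a linear shift in $s$, the Barnes zeta-function is a degree-one polynomial over $\mathcal{D}_s$ in the Hurwitz zeta-function, and then to feed this into Main Theorem 1 and Main Theorem 2 to extract the lower and upper bounds simultaneously. First I would change variables by putting $w = s-(r-1)$, so that the rectangle $r-1/2 < \sigma_1 < \Re(s) < \sigma_2 < r$ maps to $\sigma_1-(r-1) < \Re(w) < \sigma_2-(r-1)$ inside $D$, with the imaginary part (hence the height $T$) preserved. Among the shifts $\zeta(s-j,a)$, $0 \le j \le r-1$, in (\ref{eq:sri1}), exactly the top one $j=r-1$ lands in $D$, while each remaining shift $\zeta(s-j,a) = \zeta(w+(r-1-j),a)$, $0 \le j \le r-2$, has real part exceeding $1$ throughout the rectangle and is therefore given there by an absolutely convergent general Dirichlet series. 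Since $p_{r,r-1}(a) = 1/(r-1)!$, I would write
\[
\zeta_r(w+(r-1),a) = \frac{1}{(r-1)!}\,\zeta(w,a) + \widetilde{B}(w), \qquad \widetilde{B}(w) := \sum_{j=0}^{r-2} p_{rj}(a)\,\zeta\bigl(w+(r-1-j),a\bigr),
\]
where $\widetilde{B}\in\mathcal{D}_w$ (it converges absolutely for $\Re(w)>0$). Thus $\zeta_r(w+(r-1),a) = P_w(\zeta(w,a))$ with $P_w(X) := \frac{1}{(r-1)!}X + \widetilde{B}(w) \in \mathcal{D}_w[X]$ of degree one.

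For the lower bound I would invoke Main Theorem 1 with $L(w) = \zeta(w,a)$. Because $a$ is not algebraic irrational, $\zeta(\cdot,a)$ is hybridly strongly universal when $a$ is rational and $\ne 1,1/2$ or transcendental, and hybridly universal when $a = 1,1/2$. In the strongly universal cases Main Theorem 1 applies with no restriction on $P_w$ and produces more than $CT$ zeros of $P_w(\zeta(w,a))$ in any subrectangle of $D$; in the remaining cases I must additionally check that $P_w$ is not a monomial, that is, that $\widetilde{B}\not\equiv 0$. Transporting back by $s = w+(r-1)$ then gives $\gg T$ zeros of $\zeta_r(s,a)$ in $\sigma_1 < \sigma < \sigma_2$, $0 < t < T$.

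For the upper bound I would apply Main Theorem 2, again with $L(w) = \zeta(w,a)$. The Hurwitz zeta-function is a Dirichlet series for $\Re(w)>1$, continues meromorphically to $\Re(w)>1/2$ with its single simple pole at $w=1$ lying on the line $\Re(w)=1$, is of finite order, and satisfies the classical second-moment bound $\int_0^T |\zeta(\sigma+it,a)|^2\,dt \ll T$ for each fixed $1/2<\sigma<1$; hence all hypotheses of Main Theorem 2 are met. Applying it to $P_w$ with $\sigma_0 = \sigma_1-(r-1) > 1/2$ yields $N_{P(L)}(\sigma_0,T) \ll T$, which after the shift bounds the number of zeros of $\zeta_r(s,a)$ with $\Re(s)>\sigma_1$ and $0<t<T$ by $\ll T$. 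Combining the two bounds gives $\asymp T$.

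The main obstacle is the non-monomial verification precisely in the only-hybridly-universal cases $a = 1,1/2$, where strong universality is unavailable and one must confirm $\widetilde{B}\not\equiv 0$, i.e. that the coefficients $p_{rj}(a)$, $0 \le j \le r-2$, do not all vanish. By the linear independence of distinct Hurwitz shifts this reduces to an elementary computation with the explicit $p_{rj}(a)$ of (\ref{eq:sri1}), and it succeeds for all $r\ge 2$ except the single degenerate reduction $\zeta_2(s,1)=\zeta(s-1)$, in which $P_w(X)=X$ is a monomial and the count of zeros is genuinely tied to the distribution of nontrivial zeros of $\zeta$. Everything else is routine: the second-moment bound for the Hurwitz zeta-function is standard, and the change of variables is harmless.
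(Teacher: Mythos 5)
Your argument is exactly the one the paper intends --- the paper itself offers nothing beyond ``by the equation (\ref{eq:sri1}), Main Theorems and the fact mentioned above'' --- namely: shift by $w=s-(r-1)$ so that (\ref{eq:sri1}) reads $\zeta_r(w+r-1,a)=\tfrac{1}{(r-1)!}\,\zeta(w,a)+\widetilde B(w)$ with $\widetilde B\in\mathcal{D}_w$, then apply Main Theorem 1 for the lower bound (via hybrid, resp.\ hybrid strong, universality of $\zeta(\cdot,a)$) and Main Theorem 2 for the upper bound (the pole at $w=1$, finite order, and the classical second-moment bound for the Hurwitz zeta-function). All the details you supply --- absolute convergence of the lower shifts for $\Re(w)>0$, $p_{r,r-1}(a)=1/(r-1)!$, the verification of the hypotheses of Main Theorem 2 --- are correct.

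The monomial check you carry out in the cases $a=1,1/2$ is not a side remark: you have located a genuine defect that the paper overlooks. For $(r,a)=(2,1)$ formula (\ref{eq:sri1}) degenerates to $\zeta_2(s,1)=\zeta(s-1)$, so $P_w(X)=X$ is a monomial, only plain hybrid universality is available, and Main Theorem 1 gives nothing. Worse, the conclusion itself fails there: by the classical Bohr--Landau density theorem the number of zeros of $\zeta(s)$ with $\Re(s)>\sigma$, $0<\Im(s)<T$ is $o(T)$ for every fixed $\sigma>1/2$, so $\zeta_2(s,1)=\zeta(s-1)$ has $o(T)$ zeros in any rectangle $3/2<\sigma_1<\sigma<\sigma_2<2$, $0<t<T$, and the claimed lower bound $\gg T$ is false. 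Theorem \ref{th:zb1} should exclude $(r,a)=(2,1)$. Your claim that this is the \emph{only} degenerate case is right and can be seen cleanly from $\zeta_r(s,a)=\sum_{m\ge 0}\binom{m+r-1}{r-1}(m+a)^{-s}$: the polynomial $\binom{m+r-1}{r-1}$ has roots $m=-1,\dots,-(r-1)$, hence, as a polynomial in $m+a$, roots $a-1,\dots,a-(r-1)$; it is a monomial in $m+a$ precisely when all these roots vanish, i.e.\ when $r=2$ and $a=1$. With that single exception carved out, your proof is complete and coincides with the paper's intended argument.
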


Next we consider the Shintani multiple zeta-functions defined by 
\begin{equation}\label{eq:shi}
\zeta_r ({\mathbf{s}}, {\mathbf{a}} \,|\, \Lambda) := \sum_{n_1 ,\ldots, n_r =0}^{\infty} \prod_{l=1}^m 
\bigl(\lambda_{l1}(n_1+a_1) +\cdots+\lambda_{lr}(n_r+a_r) \bigr)^{-s_l},
\end{equation}
where $a_k, \lambda_{lk} >0$, $1 \le l \le m$ and $1 \le k \le r$. Imai \cite{Imai} and Hida \cite{Hida} considered the above series or more generally with characters in the numerator. In \cite[Lemma 2.4.1]{Hida}, it was showed that $\zeta_r ({\mathbf{s}}, {\mathbf{a}} \,|\, \Lambda)$ converges absolutely and uniformly on any compact subset in the region $\Re (s_l) > r/m$ for all $1 \le l \le m$. Moreover, in \cite[Theorem 2.4.1]{Hida}, it was proved that $\zeta_r ({\mathbf{s}}, {\mathbf{a}} \,|\, \Lambda)$ can be continued to the whole space ${\mathbb{C}}^m$ as a meromorphic function. 

Obviously, this is a generalization of the Barnes multiple zeta-function defined by (\ref{eq:Bardef}). Shintani (see for example \cite{Shi}) introduced the above series with $s_1 = \cdots =s_m$ in order to research the Barnes multiple gamma function. Cassou-Nogu\`es (see for example \cite{Ca}) inspired by Shintani's work, considered those multiple series of the form that the numerator of (\ref{eq:shi}) is multiplied by certain roots of unity with $s_1 = \cdots =s_m$. She proved its meromorphic continuation and gave applications to $L$-functions and $p$-adic $L$-functions of totally real number fields (see a survey \cite[Section 2]{Masmz}). 

By Theorem \ref{th:zb1}, we obtain the following theorem. 
\begin{theorem}\label{th:sz1}
Suppose that $\lambda_{11} = \cdots = \lambda_{1r} =1$, and $a_1+\cdots + a_r$ is not algebraic irrational. Then $\zeta_r ({\mathbf{s}}, {\mathbf{a}} \,|\, \Lambda) $ has zeros in $r-1/2<\Re(s_1)<r$ and $0 \le \Re (s_k)$, $2 \le k \le m$. 
\end{theorem}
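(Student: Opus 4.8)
The plan is to reduce the statement to the Barnes case of Theorem~\ref{th:zb1} by specialising every variable except $s_1$ to zero. Write $a := a_1 + \cdots + a_r$, and for ${\mathbf{n}} = (n_1, \ldots, n_r) \in {\mathbb{Z}}_{\ge 0}^r$ put $\ell_l({\mathbf{n}}) := \lambda_{l1}(n_1 + a_1) + \cdots + \lambda_{lr}(n_r + a_r)$, so that $\zeta_r({\mathbf{s}}, {\mathbf{a}} \,|\, \Lambda) = \sum_{{\mathbf{n}}} \prod_{l=1}^m \ell_l({\mathbf{n}})^{-s_l}$ in the region of absolute convergence. The hypothesis $\lambda_{11} = \cdots = \lambda_{1r} = 1$ gives $\ell_1({\mathbf{n}}) = n_1 + \cdots + n_r + a$, which is exactly the linear form appearing in the Barnes series $\zeta_r(s_1, a)$ of (\ref{eq:Bardef}) with $\lambda_1 = \cdots = \lambda_r = 1$. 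The first and central step is to prove the identity of meromorphic functions
\[
\zeta_r(s_1, 0, \ldots, 0 \,|\, \Lambda) = \zeta_r(s_1, a), \qquad s_1 \in {\mathbb{C}}.
\]

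To establish this I would observe that, since $a_k, \lambda_{lk} > 0$, every form satisfies $\ell_l({\mathbf{n}}) \ge \ell_l({\mathbf{0}}) > 0$ and $\ell_l({\mathbf{n}}) \asymp n_1 + \cdots + n_r$ as $\|{\mathbf{n}}\| \to \infty$. Hence $\prod_{l=1}^m \ell_l({\mathbf{n}})^{-\Re(s_l)} \asymp (n_1 + \cdots + n_r)^{-(\Re(s_1) + \cdots + \Re(s_m))}$, so the defining multiple series converges absolutely precisely on the half-space $\Re(s_1) + \cdots + \Re(s_m) > r$, which strictly contains the box $\Re(s_l) > r/m$ recorded in \cite[Lemma 2.4.1]{Hida}. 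In particular, for $\Re(s_1) > r$ the point $(s_1, 0, \ldots, 0)$ lies in this half-space, the factors $\ell_l({\mathbf{n}})^{0}$ ($2 \le l \le m$) are identically $1$, and the series collapses to the absolutely convergent Barnes series $\sum_{{\mathbf{n}}} \ell_1({\mathbf{n}})^{-s_1} = \zeta_r(s_1, a)$. Thus the meromorphic continuation of $\zeta_r({\mathbf{s}}, {\mathbf{a}} \,|\, \Lambda)$ agrees with $\zeta_r(s_1, a)$ on the open set $\{\Re(s_1) > r\}$ of the slice $s_2 = \cdots = s_m = 0$, and the displayed identity follows on all of ${\mathbb{C}}$ by uniqueness of meromorphic continuation.

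Granting the identity, the conclusion is immediate. Since $a = a_1 + \cdots + a_r$ is not algebraic irrational, Theorem~\ref{th:zb1} produces $\asymp T$ zeros $\rho$ of $\zeta_r(s_1, a)$ with $r - 1/2 < \Re(\rho) < r$ and $0 < \Im(\rho) < T$; for each of them the identity gives $\zeta_r(\rho, 0, \ldots, 0 \,|\, \Lambda) = \zeta_r(\rho, a) = 0$, so $(\rho, 0, \ldots, 0)$ is a zero of the Shintani multiple zeta-function with $r - 1/2 < \Re(s_1) < r$ and $\Re(s_k) = 0$ for $2 \le k \le m$, which sits inside the asserted region. I expect the only genuine obstacle to be the middle step, and specifically the point that the true domain of absolute convergence is the half-space $\Re(s_1) + \cdots + \Re(s_m) > r$ rather than the smaller box from \cite{Hida}; this is what allows the specialised point to remain inside the convergence region and lets the value of the continuation be read off directly, with no pole at $(s_1,0,\ldots,0)$. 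Should one prefer to avoid enlarging the convergence region, the same identity can instead be obtained by peeling off the variables $s_m, s_{m-1}, \ldots, s_2$ one at a time, at each stage using absolute convergence of the coefficient series (guaranteed by the positivity of the $\lambda_{lk}$ and $a_k$) to continue across $s_l = 0$ and drop the corresponding factor.
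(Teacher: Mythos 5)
Your proposal is correct and follows the paper's (implicit) argument: the paper deduces Theorem \ref{th:sz1} from Theorem \ref{th:zb1} precisely by specialising $s_2=\cdots=s_m=0$, under which the Shintani series collapses to the Barnes zeta-function $\zeta_r(s_1,a_1+\cdots+a_r)$ and the zeros supplied by Theorem \ref{th:zb1} land in the stated region. Your extra care about the tube of absolute convergence $\Re(s_1)+\cdots+\Re(s_m)>r$ and the identity theorem only makes explicit what the paper leaves unsaid.
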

The Shintani multiple zeta-functions contain many important multiple zeta-functions. For instance, the Mordell multiple zeta-functions, the Euler-Zagier-Hurwitz type of multiple zeta-functions, and the Witten multiple zeta-functions are contained.

For example, the Mordell multiple zeta-functions are defined by 
\begin{equation}\label{eq:mmzdef}
\sum_{n_1, \ldots , n_r =1}^{\infty} n_1^{-s_1} \cdots n_r^{-s_r} (n_1+ \cdots + n_r+a)^{-s_{r+1}}, 
\qquad a \ge 0
\end{equation}
Tornheim \cite{Tor} considered the values at positive integer when $a=0$ and $r=2$. Mordell \cite{Mor} studied the case $a=0$, $r=2$ and $s_1 = s_2 = s_3$, and also studied the case $s_1 = \cdots = s_{r+1}$. By Theorem \ref{th:sz1}, we can see that the Mordell multiple zeta-functions have infinitely many zeros in $r-1/2<\Re(s_{r+1})<r$ and $0 \le \Re (s_j)$, $1 \le j \le r$, unless $a$ is algebraic irrational. 

Next, let us introduce the basic properties of the Euler-Zagier-Hurwitz type of multiple zeta-functions defined by
\begin{equation}\label{eq:ezhdef}
\begin{split}
&\zeta_r (s_{(1, \ldots , r)}\,; a_{(1, \ldots , r)}) := \sum_{n_1 > n_2 > \cdots > n_r \ge 0} 
\frac{1}{(n_1+a_1)^{s_1} (n_2+a_2)^{s_2} \cdots (n_r+a_r)^{s_r}} \\
&=\sum_{\substack{n_1, \ldots ,n_{r-1}=1 \\ n_r =0}}^\infty
\frac{1}{(n_1+\cdots + n_r + a_1)^{s_1} (n_2+\cdots + n_r +a_2)^{s_2} \cdots (n_r+a_r)^{s_r}} ,
\end{split}
\end{equation}
where $s_{(1, \ldots , r)} := (s_1 ,\ldots, s_r) \in {\mathbb{C}}^r$ and $a_{(1, \ldots , r)} := (a_1 ,\ldots , a_r) \in (0,1]^r$. This series absolutely converges for $\Re (s_1) > 1$ and $\Re (s_j) \ge 1$, $2\le j \le r$. Obviously $\zeta_r (s_{(1, \ldots , r)}\,; a_{(1, \ldots , r)})$ is a generalization of the Euler-Zagier multiple zeta-functions (\ref{eq:defez}). The meromorphic continuation of $\zeta_r (s_{(1, \ldots , r)}\,; a_{(1, \ldots , r)})$ to ${\mathbb{C}}^r$ has been accomplished by Akiyama and Ishikawa \cite{AI} using the Euler-Maclaurin summation formula, Matsumoto \cite{MaN} using the Mellin-Barnes formula, and by Murty and Sinha \cite{MS} using the binomial theorem and Hartog's theorem. Moreover, Kelliher and Masri \cite{KeMa} not only proved a meromorphic continuation but also provided a way to locate trivial zeros of $\zeta_r (s_{(1, \ldots , r)}\,; a_{(1, \ldots , r)})$. The first author \cite{Nakamura4} showed the existence of the zeros for $\zeta_r (s_{(1, \ldots , r)}\,; a_{(1, \ldots , r)})$ in the region of absolute convergence when $a_1 ,\ldots , a_r$ are algebraically independent. He also showed $\zeta_r (s_{(1, \ldots , r)}\,; a_{(1, \ldots , r)})$ has infinitely many zeros in $1/2 < \Re (s_1) <1$ when $a_1$ is transcendental and $\zeta_{r-1} (s_{(2, \ldots , r)} \,; \alpha_{(2, \ldots , r)} ) \ne 0$ for fixed $\Re (s_2) > 3/2$ and $\Re (s_l) \ge 1$, $3 \le l \le r$. By Theorem \ref{th:sz1}, we can see that the Euler-Zagier-Hurwitz type of multiple zeta-functions have infinitely many zeros in $r-1/2<\Re(s_1)<r$ and $0 \le \Re (s_j)$, $2 \le j \le r$, unless $a_1$ is algebraic irrational. 

At the end, let us define the Witten multiple zeta-functions. Let ${\mathfrak{g}}$ be a complex semisimple Lie algebra, and define
$$
\zeta (s \,; {\mathfrak{g}}) := \sum_{\smash{\rho}} \bigl(\dim \rho \bigr)^{-s},
$$
where $\rho$ runs over all finite dimensional irreducible representations of a certain semisimple Lie algebra ${\mathfrak{g}}$. Special values of this Dirichlet series were firstly studied by Witten \cite{Witten} in connection with quantum gauge theory. And Zagier \cite{Zaecm} called the above function as the Witten zeta-function associated with ${\mathfrak{g}}$. Some evaluation formulas for $\zeta (s \,; {\mathfrak{g}})$ at positive even integral integers were given by Zagier \cite{Zaecm}, Gunnells and Sczech \cite{Gu}. In order to analyze this multiple series closely, Komori, Matsumoto and Tsumura \cite{Ko2} introduced the following multi-variable version of the series. 

Let $r$ be the rank of ${\mathfrak{g}}$. Denote by $\Delta = \Delta({\mathfrak{g}})$ the set of all roots of ${\mathfrak{g}}$, by $\Delta_+ = \Delta_+({\mathfrak{g}})$ the set of all positive roots of ${\mathfrak{g}}$. For any $\alpha \in \Delta$, we denote by $\alpha^{\!\lor}$ the associated coroot. Let $\lambda_1, \ldots , \lambda_r$ be the fundamental weights satisfying $\langle \alpha_j^{\!\lor}, \lambda_k \rangle = \delta_{jk}$ (Kronecker's delta). Define the Witten multiple zeta-functions associated with semisimple Lie algebras by
\begin{equation}\label{eq:wmdef}
\zeta_r ({\mathbf{s}} \,; {\mathfrak{g}}) := \sum_{n_1, \ldots , n_r =1}^{\infty} \prod_{\alpha \in \Delta_+}
\langle \alpha^{\!\lor}, n_r \lambda_1 + \cdots + n_r \lambda_r \rangle^{-s_\alpha},
\end{equation}
where ${\mathbf{s}} := (s_{\alpha})_{\alpha \in \Delta_+} \in {\mathbb{C}}^m$ (here $m=|\Delta_+|$ is the number of positive roots of ${\mathfrak{g}}$). When ${\mathfrak{g}}$ is type of $X_r$, where $X=A,B,C,D,E,F,G$, Komori, Matsumoto and Tsumura \cite{Ko2} gave explicit formulas of (\ref{eq:wmdef}). For example (see \cite[(2.3)]{Ko2}), we have 
$$
\zeta_r ({\mathbf{s}} \,; A_r) = \sum_{n_1, \ldots , n_r =1}^{\infty} \prod_{1 \le j < k \le r+1}
(n_k+ \cdots + n_{j-1})^{-s_{jk}} , \qquad {\mathbf{s}} := (s_{jk}) \in {\mathbb{C}}^{r(r+1)/2} .
$$
Hence, \cite[Theorem 5.4]{Ko2} and Theorem \ref{th:sz1} imply that the Witten multiple zeta-functions also have infinitely many zeros when ${\mathfrak{g}} = A_r$ with $r \ge 2$ (by \cite[(2.3)]{Ko2}), $B_r$ with $r \ge 2$ (by \cite[(2.9)]{Ko2}), $C_r$ with $r \ge 2$ (by \cite[(2.15)]{Ko2}), $D_r$ with $r \ge 3$ (by $D_2 \simeq A_1 \oplus A_1$ and \cite[(2.21)]{Ko2}), $E_6, E_7, E_8, F_4, G_2$ (by \cite[diagrams in p.~383 and 384]{Ko2}). 


\end{document}